\documentclass[11pt,twoside]{article}
\usepackage{amsmath,amssymb,amsthm,bm,mathrsfs}
\usepackage{graphicx}
\usepackage{epstopdf}
\usepackage{caption}
\usepackage{tabu}
\usepackage{multirow}
\usepackage{subfigure}
\usepackage{booktabs}
\usepackage{diagbox}
\usepackage{algorithm}
\usepackage{algpseudocode}
\usepackage{listings}
\usepackage{color,xcolor}

\allowdisplaybreaks[4]

\lstdefinelanguage{Maple}{
  morekeywords={and,assuming,break,by,catch,description,do,done,elif,else,
                end,error,export,finally,for,from,global,if,implies,in,
                intersect,local,minus,mod,module,next,not,od,option,options,
                or,proc,quit,read,return,save,stop,subset,then,to,try,
                union,use,uses,while,xor}, 
  morecomment=[l]{\#}, 
  morecomment=[s]{\{}{\}}, 
  morestring=[b]" 
}

\makeatletter
\@addtoreset{equation}{section}
\makeatother

\newtheorem{exam}{Example}[section]

\newtheorem{thm}{Theorem}[section]
\newtheorem{rmk}{Remark}[section]

\newcounter{saveeqn}%

\makeatletter
\evensidemargin 
\oddsidemargin
\def\thanks#1{
	\protected@xdef\@thanks{
		\@thanks\protect\footnotetext{#1}
	}
}
\makeatother

\title{
\Large\bf{Normal form method of center-focus problem in piecewise-smooth systems and algorithm design}
}

\author{Xingwu Chen$^1$, ~~Jiahao Li$^{2,*}$,
\footnote{*~Corresponding author. }
Tao Li$^3$
\footnote{Email address: scuxchen@scu.edu.cn (Xingwu Chen), ljhmath@swu.edu.cn (Jiahao Li), litao@swufe.edu.cn (Tao Li). }
\\
{\small 1. School of Mathematics, Sichuan University, Chengdu, Sichuan 610064, P. R. China}\\
{\small 2. School of Mathematics and Statistics, Southwest University,}\\
  {\small  Chongqing 400715, P. R. China}\\
{\small 3. School of Mathematics, Southwestern University of Finance and Economics,}\\
  {\small Chengdu, Sichuan 611130, P. R. China}
}
\date{}

\begin{document}
\maketitle

\begin{abstract}
We investigate the normal form method in piecewise-smooth monodromic systems to
distinguish center from focus and determine the order of focus as well as degenerate Hopf bifurcations.
For the normal form of piecewise-smooth monodromic systems given in previous publications in the sense of topologically equivalence,
we prove the algebraic equivalence between the Lyapunov constants of the original piecewise-smooth system and the
coefficient series of its normal form.
With this algebraic equivalence, we prefer to compute the normal form coefficient series
to avoid cumbersome integrals of trigonometric functions in the computation of Lyapunov constants,
and find the relations of foci of the piecewise-smooth system and its subsystems:
among the orders and among the stabilities.
Moreover,
we design algorithms for computing the normal form coefficient series and combine them with the obtained stability relation
to construct stabilizing switching signals for state-dependent switched nonlinear systems, one basic problem in the control theory.
\\
\\
{\bf 2020 MSC:} 34A36, 34C20, 34C23, 37G15.
\\
{\bf Keywords:} Lyapunov constant, monodromic singular point, normal form, piecewise-smooth system.
\end{abstract}

\baselineskip 15pt
\parskip 10pt
\thispagestyle{empty}
\setcounter{page}{1}

\section{Introduction}

\setcounter{equation}{0}
\setcounter{lm}{0}
\setcounter{thm}{0}
\setcounter{rmk}{0}
\setcounter{df}{0}
\setcounter{cor}{0}

A singular point is called monodromic if no orbits approach it in a definite direction as $t\to \pm\infty$ (see \cite{AN,ROM,ZZ}).
In the study of planar nonlinear differential equations, two fundamental problems arise concerning such a point.
One is the center-focus problem: determining whether a monodromic singular point is a center or a focus.
The other is the Hopf cyclicity
problem: determining the maximum number of limit cycles that can bifurcate from a monodromic singular point.
Both problems have a long history and have attracted the attention of many scholars (see, e.g., \cite{Bau,GIN,HU,PEA,ROM} and the references therein).

Recent decades have seen a surge of interest in the study of the center-focus and Hopf cyclicity problems for piecewise-smooth systems (see, e.g., \cite{BUZC,BUZ,CX1,CX2,CO,HAN,LIA,TIAN}),
which arise naturally in numerous practical applications such as mechanical systems with dry friction, switching control, electronic systems (see \cite{BDM,JMR,ZOU} and the references therein),
with particular emphasis on piecewise-smooth systems of the form
\begin{align}\label{ge}
		\left( \begin{array}{c}
		\dot{x}\\
		\dot{y}\\
	\end{array} \right) =\left\{
    \begin{aligned}
&		\left( \begin{array}{c}
		X^+(x,y)\\
		Y^+(x,y)\\
	\end{array} \right), ~~~~&&\mathrm{if}~y>0,\\
&		\left( \begin{array}{c}
		X^-(x,y)\\
		Y^-(x,y)\\
	\end{array} \right),   && \mathrm{if}~y<0,
	\end{aligned} \right.
\end{align}
where $X^{\pm}$, $Y^{\pm}$ are real analytic functions and the origin $O: (0,0)$ is a monodromic singular point.
The piecewise-smooth system~\eqref{ge} is split into two subsystems by the line $y=0$, which is called a {\it switching line}.
The subsystem defined on the half plane $y>0$ (resp. $y<0$) is called the {\it upper} (resp. {\it lower}) subsystem.
Usually, the orbits of system~\eqref{ge} are defined using Filippov's convention, as described in \cite{FIL}.
Compared with classical analytic systems, the challenges in studying the center-focus and Hopf cyclicity problems for the piecewise-smooth system~\eqref{ge} come from the following two aspects.

The first challenge is {\it that more types of monodromic singular points, generated by the switching structure of system~\eqref{ge}, need to be considered}.
Indeed, a focus or an invisible tangent point of the subsystems can compose three types of monodromic singular points, namely {\it focus-focus} (FF), {\it focus-parabolic} (FP) and {\it parabolic-parabolic} (PP) as shown in \cite[Figure 1]{LIJ} (here Figure~\ref{MSP} for convenience).
\begin{figure}[htp]
  \centering
  \includegraphics[width=15cm]{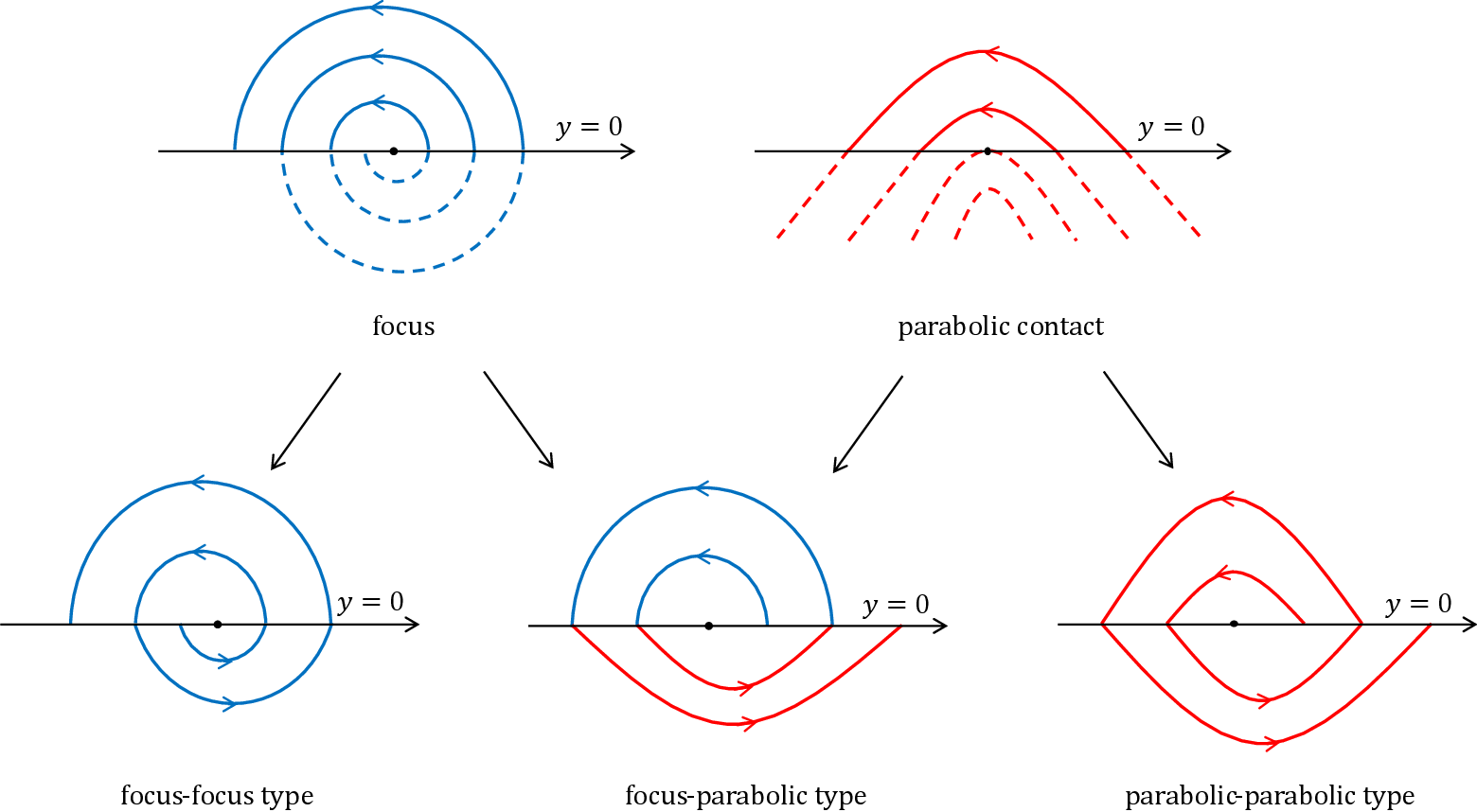}
  \caption{Three types of monodromic singular points of system~\eqref{ge}. }\label{MSP}
\end{figure}
Note that the parabolic-focus type can be reduced to FP type by the transformation $(x,y,t)\to (-x,-y,t)$.
We assume that the focus side subsystem has a linear focus $O$, i.e., its Jacobian matrix at $O$ has eigenvalues $\alpha\pm\beta i$ ($\beta\ne 0$),
and assume that the parabolic side subsystem has an invisible tangent point $O$ with multiplicity $2\ell-1$ ($\ell\in \mathbb{Z}^+$), i.e., it satisfies
\begin{equation*}
  Y(0,0)=\frac{\partial Y(0,0)}{\partial x}=\cdots=\frac{\partial^{2\ell-2}Y(0,0)}{\partial x^{2\ell-2}}=0 ~~{\rm and~~}
  X(0,0)\frac{\partial^{2\ell-1}Y(0,0)}{\partial x^{2\ell-1}}<0,
\end{equation*}
where $(X,Y,\ell)=(X^+,Y^+,\ell^+)$ or $(X^-,Y^-,\ell^-)$.

The second challenge is {\it that the generic approach or computer algorithm for computing Lyapunov constants or their algebraically equivalent quantities
for the monodromic singular point of system~\eqref{ge} is still incomplete.}
Note that once Lyapunov constants or their algebraically equivalent quantities (the definition is given in section 2) are obtained, the following analysis based on these constants can be carried out using similar methods established for classical analytic systems (see \cite{ROM}).
Many significant contributions have been made to address this challenge by directly computing Lyapunov constants.
Using $(R,\theta,p,q)$-generalized polar coordinates, \cite{CO} derived formulas for the first three Lyapunov constants of FF, FP and PP types.
For the higher-order Lyapunov constants of FF type, \cite{GAS} established an algorithm based on a decomposition of certain one-forms in polar coordinates, and later \cite{TIAN} developed a new recursive procedure.
For PP type, a general formula for computing Lyapunov constants was given in \cite{NOV}.
To the best of our knowledge, no general algorithm exists for computing Lyapunov constants of FP type.

Computing Lyapunov constants involves cumbersome integrals of trigonometric functions, which are difficult to evaluate by hand or by computer.
For analytic systems, this difficulty can be avoided by the normal form method or the formal series method, which computes algebraically equivalent quantities of Lyapunov constants (see \cite{HANB,ROM}).
Based on those methods, many efficient computer algorithms have been developed (see, e.g., \cite{LLO,PEA,YU}).
Consequently, many scholars have attempted to extend the normal form method to the study of the piecewise-smooth system~\eqref{ge}.
For FF type, a second-order normal form and an arbitrary-order normal form are given in
\cite{CX} and \cite{EM2} respectively. But there is no clear relationship between such normal forms and Lyapunov constants except
the first two Lyapunov constants obtained in \cite{CX}.
For PP type, \cite{EM} gave an arbitrary-order normal form, but the Lyapunov constants remain unknown except for a special case in \cite{EM1}.
Recently, \cite{LIJ} provided new arbitrary-order normal forms for all FF, FP and PP types, and established the algebraic equivalence between the Lyapunov constants and the corresponding normal form coefficients.

However, the previous works only proved that the transformation from system~\eqref{ge} to its normal form preserves the local topological structure, without establishing the algebraic equivalence of the Lyapunov constants between the original system and its normal form.
This leads to the following question.
\begin{description}
  \item[Q1.] Are the Lyapunov constants of the original system~\eqref{ge} algebraically equivalent to its normal form coefficient series?
\end{description}
In this paper, we fill this gap and give a positive answer to {\bf Q1}.
Thanks to algebraic equivalence, we rather compute the normal form coefficient series than compute the Lyapunov constants, which only involve computations of some linear algebraic systems.
Moreover, we design algorithms for computing normal form coefficient series.
With our algorithm, the normal form coefficient series can be output directly based on
the expression of system~\eqref{ge}.

Based on the algebraic equivalence described in {\bf Q1}, we develop the normal form method to analyze the following question, which is important for both theoretical research and practical applications (see, e.g., \cite{EM2,LIB,LIb,WU}).
\begin{description}
  \item[Q2.] Can two UNSTABLE foci of the subsystems produce a STABLE monodromic singular point of the piecewise-smooth system?
   Moreover, how does its order depend on the orders of the foci of the subsystems?
\end{description}
In this paper, we provide a complete answer to {\bf Q2}.
Our results generalize \cite[Proposition 7]{EM2} from a specific system to general ones, from FF type to FF and FP types, and from order $1$ to arbitrary orders.
Inspired by these results, we apply the theoretical and algorithmic results of this paper to a basic problem in control theory, i.e., the stabilization of switched systems,
and obtain a necessary and sufficient condition for a class of switching signals that guarantees asymptotic stability of a switched nonlinear system.

This paper is organized as follows.
In Section 2, we introduce Lyapunov constants and the normal forms established in \cite{LIJ}.
We present our main results in Section 3, and give the algorithms for computing normal form coefficients in Section 4.
In Section 5, we combine our main results with these algorithms to study the stabilization of switched nonlinear systems.

\section{Lyapunov constants and normal forms}
\setcounter{equation}{0}
\setcounter{lm}{0}
\setcounter{thm}{0}
\setcounter{rmk}{0}
\setcounter{df}{0}
\setcounter{cor}{0}

In this section, we first introduce the classic Poincar\'e-Lyapunov method for the center-focus problem
of the monodromic singular point $O$.
Then for the piecewise-smooth monodromic system~\eqref{ge},
we present the normal form method developed in \cite{LIJ} and make a preparation for proving the algebraic equivalence
between the Lyapunov constants in the classic method and the normal form coefficient series in the normal form method.

Suppose that the nearby orbits of the monodromic singular point $O$ rotate counterclockwise without loss of generality.
Then there exists a sufficiently small real number $\varepsilon> 0$ such that for any $x\in (0, \varepsilon)$ (resp. $x\in (-\varepsilon, 0)$),
the orbit starting from $(x,0)$ will intersect the negative (resp. positive) part of the $x$-axis,
and we denote the first intersection point by $(\Pi^+(x),0)$ (resp. $(\Pi^-(x),0)$).
Define $\Pi^\pm(0)=0$.
The maps
\begin{equation*}
\begin{aligned}
  \Pi^+: \left[0,\varepsilon\right)&\to \mathbb{R}^- \\
  x&\mapsto \Pi^+(x)
\end{aligned}
~~~~{\rm and}~~~~
\begin{aligned}
  \Pi^-: \left(-\varepsilon,0\right]&\to \mathbb{R}^+ \\
  x&\mapsto \Pi^-(x)
\end{aligned}
\end{equation*}
are called the {\it upper} and {\it lower return maps} of the monodromic singular point $O$, respectively.
Following the displacement function method of smooth systems (see \cite{GUC,KUZ,ZZ}),
for system~\eqref{ge} there is a natural way to define a displacement function as
\begin{equation}\label{displd}
\mathcal{D}(x):=\Pi^-\circ\Pi^+(x)-x
\end{equation}
for $x\in \left[0,\varepsilon\right)$.
As shown in Figure \ref{DF}(a),
we can observe that the positive zeros of the displacement function $\mathcal{D}(x)$ correspond to closed orbits of system~\eqref{ge}.
\begin{figure}[htp]
\centering{}
\subfigure[ $\Pi^-(\Pi^+(x))-x$]{
\scalebox{0.66}[0.66]{
\includegraphics{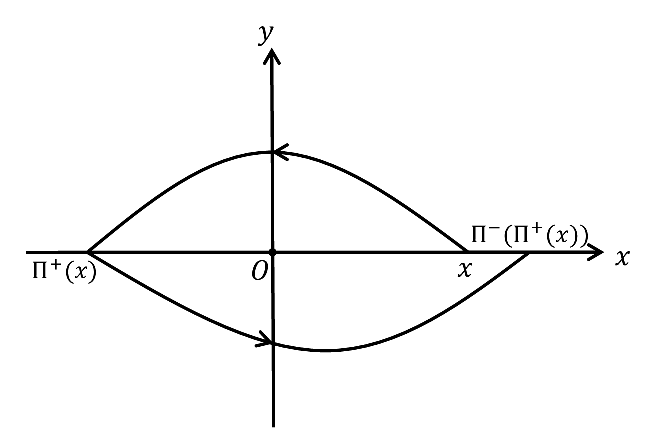}
}
}
\subfigure[$(\Pi^-)^{-1}(x)-\Pi^+(x)$]{
\scalebox{0.66}[0.66]{
\includegraphics{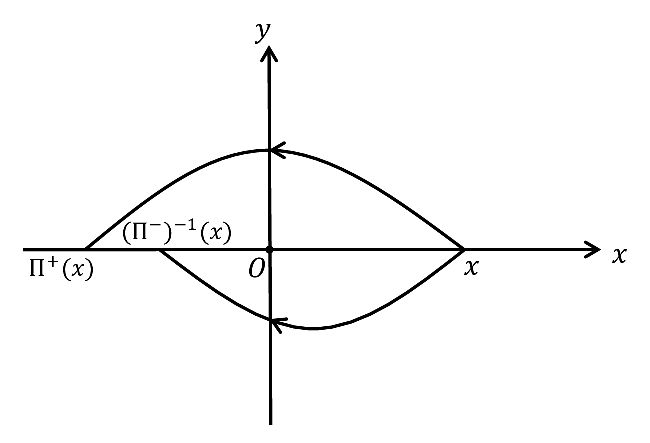}
}
}
\caption{Two equivalent definitions of the displacement function. }
\label{DF}
\end{figure}
The displacement function $\mathcal{D}(x)$ can be expanded at $x=0$ as a power series,
\begin{equation*}
\mathcal{D}(x)=\sum_{k\ge 1}\bar{V}_kx^k,
\end{equation*}
where the coefficient $\bar{V}_k$ is called the {\it $k$-th Lyapunov constant}.

Note that Lyapunov constants are essential for studying the local dynamics of the monodromic singular point $O$ of system~\eqref{ge}.
In fact, on the one hand,
it can be verified that $O$ is a center if and only if $\bar{V}_k=0$ for all $k\ge1$,
and $O$ is a focus if and only if there exists $k\in \mathbb{Z}^+$ such that $\bar{V}_1=\cdots=\bar{V}_{k-1}=0$ and $\bar{V}_k\ne0$.
On the other hand, the sign of the first nonzero Lyapunov constant $\bar{V}_k$ determines the stability of the focus $O$.
Specifically, if $\bar{V}_k<0$ (resp. $\bar{V}_k>0$), then $\mathcal{D}(x)<0$ (resp. $\mathcal{D}(x)>0$) holds for sufficiently small $x$, which implies that $O$ is a stable (resp. unstable) focus.
Clearly, the composition of the half return maps in the displacement function $\mathcal{D}(x)$ defined by \eqref{displd} poses significant difficulties in computing Lyapunov constants.
To avoid the composition, in \cite{CX2,CO,HAN},
another way is to define the difference function by
\begin{equation}\label{DeltaF}
\Delta(x):=(\Pi^-)^{-1}(x)-\Pi^+(x)
\end{equation}
for $x\in \left[0,\varepsilon\right)$
as shown in Figure \ref{DF}(b).
The difference function $\Delta(x)$ can be expanded as
\begin{equation*}
  \Delta(x)=\sum_{k\ge1} V_kx^k
\end{equation*}
at $x=0$.
It is proved in \cite{CX2} that
the first nonzero quantities of series $\{V_k\}_{k=1}^{+\infty}$ and $\{\bar{V}_k\}_{k=1}^{+\infty}$ share the same subscript and differ by a positive constant multiple.
This relationship of those two series is called {\it algebraic equivalence},
denoted by $V_k\sim\bar{V}_k$.
Note that two algebraically equivalent series of Lyapunov constants are equivalent in the study of the center-focus problem and Hopf cyclicity problem.
Thus, in this paper, we consider the difference function $\Delta(x)$ defined by \eqref{DeltaF}, and also call its coefficient $V_k$ the $k$-th Lyapunov constant.

Similar to classical smooth systems,
the index $k$ of the first nonzero Lyapunov constant $V_k$ can be used to determine the order of the monodromic singular point $O$.
Clearly, smooth systems can be regarded as a special case of piecewise-smooth systems with identical upper and lower subsystems.
Therefore, it is necessary to unify the concept of the order of monodromic singular points for both piecewise-smooth and smooth cases, and to address this problem we introduce the fractional order as in \cite{CX1}.
That is,
the monodromic singular point $O$ of system~\eqref{ge} is said to be of order $\varsigma\in\bigcup_{i=0}^{+\infty}\{i/2\}$ if
$V_{2\varsigma+1}\neq0$ and $V_i=0$ for all $i\in\{1,2,...,2\varsigma\}$.
Note that the monodromic singular point $O$ of order $0$ (resp. order $+\infty$) corresponds to a rough focus (resp. a center).
It is worth mentioning that for the piecewise-smooth system~\eqref{ge}, the maximum number of crossing limit cycles that can arise from $O$ via degenerate Hopf bifurcation
is closely related to the order $\varsigma$ of $O$ (see more details in \cite{CX2}).

It is clear that the Poincar\'e-Lyapunov method plays a crucial role in the analysis of the local dynamics and bifurcations of the monodromic singular point $O$.
However, the computation of Lyapunov constants, which employs polar coordinates or general polar coordinates,
involves cumbersome integrals of trigonometric functions, making it challenging even with computer assistance.
To address this, the normal form method proposed in \cite{LIJ} is a good choice.
For any given positive integer $N$,
one can piecewise construct a coordinate transformation
\begin{equation}\label{geni}
  \left(
    \begin{array}{c}
      x \\
      y \\
    \end{array}
  \right)\to
  \left\{\begin{aligned}
  & \left(\begin{array}{cc}
      1 & q_2^+ \\
      0 & q_1^+ \\
    \end{array}\right)
  \left(
    \begin{array}{c}
      x \\
      y \\
    \end{array}
  \right)+
  \left(\begin{array}{c}
      \Phi^+(x,y) \\
      \Psi^+(x,y)\\
    \end{array}\right),~~&&{\rm if}~y\ge0,\\
    &\left(\begin{array}{cc}
      1 & q_2^- \\
      0 & q_1^- \\
    \end{array}\right)
    \left(
    \begin{array}{c}
      x \\
      y \\
    \end{array}
  \right)+
  \left(\begin{array}{c}
       \Phi^-(x,y) \\
      \Psi^-(x,y)\\
    \end{array}\right),&&{\rm if}~y\le0
  \end{aligned}
  \right.
\end{equation}
with $q^\pm_1>0$, $\Phi^+(x,0)=\Phi^-(x,0)$, $\Psi^\pm(x,0)=0$,
and a time scaling
\begin{equation}\label{getsc}
  \mathrm{d}t\to\left\{\begin{aligned}
  &K^+(x,y)\mathrm{d}t,~~&&{\rm if}~y\ge0, \\
  &K^-(x,y)\mathrm{d}t,&&{\rm if}~y\le0 \\
  \end{aligned}\right.
\end{equation}
with $K^\pm(x,y)>0$,
which carry the piecewise-smooth monodromic systems~\eqref{ge} into the FF normal form
  \begin{align}\label{NFF}
		\left( \begin{array}{c}
		\dot{x}\\
		\dot{y}\\
	\end{array} \right) =\left\{
    \begin{aligned}
&		\left(\begin{array}{cc}
       \gamma_1^+ & -1 \\
       1 & \gamma_1^+ \\
     \end{array}\right)
     \left(\begin{array}{c}
         x \\
         y \\
       \end{array}\right)+
       \sum_{i=1}^{N}\gamma_{i+1}^+y^i\left(\begin{array}{c}
         x \\
         y \\
       \end{array}\right)+{\boldsymbol G}^+_{N+2}(x,y),~~~~&&\mathrm{if}~y>0,\\
&		\left(\begin{array}{cc}
       \gamma_1^- & -1 \\
       1 & \gamma_1^- \\
     \end{array}\right)
     \left(\begin{array}{c}
         x \\
         y \\
       \end{array}\right)+
       \sum_{i=1}^{N}\gamma_{i+1}^-y^i\left(\begin{array}{c}
         x \\
         y \\
       \end{array}\right)+{\boldsymbol G}^-_{N+2}(x,y),   && \mathrm{if}~y<0
	\end{aligned} \right.
\end{align}
if $O$ is of FF type,
the FP normal form
      \begin{align}\label{NFP}
		\left( \begin{array}{c}
		\dot{x}\\
		\dot{y}\\
	\end{array} \right) =\left\{
    \begin{aligned}
&		\left(\begin{array}{cc}
       \gamma_1^+ & -1 \\
       1 & \gamma_1^+ \\
     \end{array}\right)
     \left(\begin{array}{c}
         x \\
         y \\
       \end{array}\right)+
       \sum_{i=1}^{N}\gamma_{i+1}^+y^i\left(\begin{array}{c}
         x \\
         y \\
       \end{array}\right)+{\boldsymbol G}^+_{N+2}(x,y),~~~~&&\mathrm{if}~y>0,\\
&	   \left(\begin{array}{c}
         1 \\
         x^{2\ell^--1}\\
       \end{array}\right)+
       {\boldsymbol R}^-_{N}(x,y),  && \mathrm{if}~y<0
	\end{aligned} \right.
\end{align}
if $O$ is of FP type, and
the PP normal form
      \begin{align}\label{NPP}
		\left( \begin{array}{c}
		\dot{x}\\
		\dot{y}\\
	\end{array} \right) =\left\{
    \begin{aligned}
&		\left(\begin{array}{c}
         -1 \\
         x^{2\ell^+-1}+\sum\limits_{i=1}^{N}\sigma_{i+1}^+x^{2\ell^+-1+i}\\
       \end{array}\right)+
      {\boldsymbol R}^+_{N}(x,y),
        && \mathrm{if}~y>0,\\
&		      \left(\begin{array}{c}
         1 \\
         x^{2\ell^--1}\\
       \end{array}\right)+
       {\boldsymbol R}^-_{N}(x,y), && \mathrm{if}~y<0
	\end{aligned} \right.
\end{align}
if $O$ is of PP type, in a topologically equivalent manner.
Here ${\boldsymbol G}^{\pm}_{N+2}(x,y)$ are sums of homogeneous  polynomials of degree greater or equal to $N+2$,
 ${\boldsymbol R}^{\pm}_{N}(x,y)$ are sums of quasi-homogeneous polynomials of type $(1,2\ell^{\pm})$ with
degree greater or equal to $N$, and
the multiplicity of the tangent point for the upper (resp. lower) system is set to $2\ell^+-1$ (resp. $2\ell^--1$), where $\ell^\pm\in \mathbb{Z}^+$.

Series
$$\left\{\gamma_k^++(-1)^{k-1}\gamma_k^-\right\}_{k=1}^{+\infty}, ~~~~\left\{\gamma_k^+\right\}_{k=1}^{+\infty},~~~~
\left\{(1+(-1)^k)\sigma_k^+\right\}_{k=1}^{+\infty}$$
are called the {\it normal form coefficient series} of the FF normal form~\eqref{NFF}, FP normal form~\eqref{NFP} and PP normal form~ \eqref{NPP}, respectively.
It is proved in \cite{LIJ} that the Lyapunov constant series (denoted by $\{V^\ast_k\}_{k=1}^{+\infty}$) of the above piecewise-smooth monodromic normal forms are algebraically equivalent to their normal form coefficient series.
More precisely,
\begin{equation}\label{Lyanff}
V_k^\ast\sim \gamma_k^++(-1)^{k-1}\gamma_k^-
\end{equation}
for the FF normal form~\eqref{NFF},
\begin{equation}\label{Lyanfp}
V_k^\ast\sim\gamma_k^+
\end{equation}
for the FP normal form~\eqref{NFP}
and
\begin{equation}\label{Lyanpp}
V_k^\ast\sim \left(1+(-1)^k\right)\sigma_k^+
\end{equation}
for the PP normal form~\eqref{NPP}.
Therefore, for the Lyapunov constants $V_k$ ($k\ge1$) of the original system~\eqref{ge},
if the relationship $V_k \sim V_k^\ast$ can be established,
then the computation of Lyapunov constants reduces to computing its normal form series,
which only involves solving some explicit linear algebraic systems (see more details in Section 4).
Motivated by this, we establish $V_k \sim V_k^\ast$ in the next section.


\section{Main results}
\setcounter{equation}{0}
\setcounter{lm}{0}
\setcounter{thm}{0}
\setcounter{rmk}{0}
\setcounter{df}{0}
\setcounter{cor}{0}

In this section, we prove the algebraic equivalence between
the Lyapunov constants of the original piecewise-smooth system~\eqref{ge} and the coefficient series of its normal form.
Based on this algebraic equivalence,
we find the relationship between the order of focus of the original
system and the orders of foci of its subsystems as well as stability.

We first introduce
the following {\it ordinary Bell polynomials} (see \cite{COM})
\begin{equation*}
  \hat{B}_{k,i}\left(v_1,v_2,...,v_{k-i+1}\right)=
  \sum_{S_{k-i+1}}\frac{i!}{b_1!b_2!\cdots b_{k-i+1}!}\prod_{j=1}^{k-i+1}v_j^{b_j},
  \end{equation*}
where
$S_{k-i+1}$ is the set of all $(k-i+1)$-tuples of nonnegative integers $(b_1,...,b_{k-i+1})$ satisfying
$$\sum_{j=1}^{k-i+1}jb_j=k,~~~~ \sum_{j=1}^{k-i+1}b_j=i.$$
Note that the ordinary Bell polynomials play an important role in computing the product of infinite series,
and we have
\begin{equation*}
  {\left(\sum_{j=1}^{\infty}v_jx^j\right)}^n=\sum_{i=n}^{\infty}
  \hat{B}_{i,n}(v_1,...,v_{i-n+1})x^i.
\end{equation*}
Then we state the main results in the following two theorems.

 \begin{thm}\label{dinfulvk}
 For the piecewise-smooth monodromic system~\eqref{ge}, the Lyapunov constant series $\{V_k\}_{k=1}^{+\infty}$ is
 algebraically equivalent to the normal form coefficient series $\{\gamma_k^++(-1)^{k-1}\gamma_k^-\}_{k=1}^{+\infty}$
 (resp. $\{\gamma_k^+\}_{k=1}^{+\infty}$, $\{(1+(-1)^k)\sigma_k^+\}_{k=1}^{+\infty}$)
 if the origin of system~\eqref{ge} is of FF (resp. FP, PP) type, i.e., the first nonzero quantities of these two series share the same subscript and
differ by a positive constant multiple.
\end{thm}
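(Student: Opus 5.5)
Since \eqref{Lyanff}--\eqref{Lyanpp} already give $V_k^\ast\sim$ the normal form coefficient series, and algebraic equivalence is transitive, the whole proof reduces to $V_k\sim V_k^\ast$, where $\{V_k^\ast\}$ are the Lyapunov constants of the normal form obtained from \eqref{ge} by the piecewise transformation \eqref{geni} and time scaling \eqref{getsc}. The time scaling $K^\pm>0$ does not change orbits or their orientation, so the half return maps are unchanged by it. The issue is therefore only the effect of the change of coordinates on the switching line.

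First I would restrict \eqref{geni} to $y=0$. Since $\Psi^\pm(x,0)=0$ and $\Phi^+(x,0)=\Phi^-(x,0)=:\phi(x)$, both pieces induce the same map $h(x)=x+\phi(x)$ of the $x$-axis into itself. Here $\phi$ is analytic (or at least a formal series to the order needed), with $\phi(x)=O(x^2)$, so $h$ is a near-identity diffeomorphism. The switching line is invariant and the upper/lower half planes are preserved because $q_1^\pm>0$. Hence the half return maps of the normal form satisfy
\[\Pi_\ast^\pm=h\circ\Pi^\pm\circ h^{-1}.\]
Consequently
\[\Delta^\ast(x)=(\Pi^-_\ast)^{-1}(x)-\Pi^+_\ast(x)=h\big((\Pi^-)^{-1}(h^{-1}x)\big)-h\big(\Pi^+(h^{-1}x)\big).\]

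Next set $u=h^{-1}(x)$, $a=(\Pi^-)^{-1}(u)$ and $b=\Pi^+(u)$, so that $a-b=\Delta(u)$. By the mean value theorem,
\[h(a)-h(b)=h'(\xi)\,\Delta(u),\]
with $\xi$ between $a$ and $b$, both of which are $O(u)$. Since $h'(0)=1$, we get $\Delta^\ast(x)=\big(1+O(x)\big)\,\Delta\big(h^{-1}(x)\big)$. To make this rigorous at the level of series, I would write
\[h(a)-h(b)=(a-b)\,H(a,b),\qquad H(a,b)=1+\sum_{n\ge2}\phi_n\,\frac{a^n-b^n}{a-b},\]
which is analytic with $H(0,0)=1$. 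Composing with $h^{-1}(x)=x+O(x^2)$ and expanding via the ordinary Bell polynomials then gives
\[V_k^\ast=V_k+\sum_{j<k}c_{k,j}V_j,\]
a lower-triangular relation with unit diagonal. From this, the first nonzero $V_k^\ast$ has the same index as the first nonzero $V_k$ and equals it, so the positive constant is $1$.

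The main obstacle is the legitimacy of the conjugacy step, in two respects. First, the normal form of \cite{LIJ} is only obtained up to order $N$ with remainders $\boldsymbol G_{N+2}$, $\boldsymbol R_N$, and the transformation may be only formal. I would handle this by fixing $N$ larger than the index of interest: the truncation affects $V_j^\ast$ only for $j$ beyond some bound growing with $N$, which has to be checked separately for the FF, FP and PP types. In the PP and FP cases the quasi-homogeneous weights $(1,2\ell^\pm)$ require verifying that $\Pi^\pm$ is analytic in $x$ and that the error from $\boldsymbol R_N$ enters only at order about $N$. Second, in the FF case the linear part diagonalization might rescale $x$ on the axis, i.e.\ $h'(0)=c\neq1$. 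In that situation $V_k^\ast=c^{1-k}V_k+\cdots$ with $c>0$, which still gives algebraic equivalence with a positive multiple, so I would carry $c>0$ through the computation rather than assume $c=1$.
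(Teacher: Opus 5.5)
Your proof is correct, and its skeleton matches the paper's. You reduce to $V_k\sim V_k^\ast$ via \eqref{Lyanff}--\eqref{Lyanpp}, discard the time scaling, and note that both pieces of \eqref{geni} restrict to the same near-identity map on $y=0$. Hence the new half return maps are conjugate to the old ones by that map. You conjugate by $h$ where the paper conjugates by $\mathcal{Q}^{-1}$ for the same formula $x+\Phi^\pm(x,0)$; this direction convention is immaterial.

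Where you differ is the final step. The paper expands each conjugacy separately with ordinary Bell polynomials to get the recursions \eqref{Hatgampo}--\eqref{Hatgamne} for $\hat\Gamma_k^\pm$. It then subtracts them and needs an induction to make the cross terms $\Upsilon_i\Delta\hat B_{k,i}$ vanish. Your divided-difference factorization $h(a)-h(b)=(a-b)H(a,b)$ instead gives the functional identity $\Delta^\ast(x)=H\big(a(x),b(x)\big)\,\Delta\big(h^{-1}(x)\big)$ with $H(0,0)=h'(0)$. Equality of vanishing orders and of leading coefficients follows immediately, with no induction or Bell-polynomial bookkeeping. The case $h'(0)=c>0$, which gives $V_k^\ast=c^{1-k}V_k$, also comes for free. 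The paper's coefficient-level route, in exchange, yields explicit formulas for $\hat\Gamma_k^\pm$ in terms of $\Gamma_k^\pm$ and the gluing coefficients $\Upsilon_k$, which it uses afterwards to explain their role.

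The two obstacles you flag do not actually arise here. For fixed $N$, \eqref{geni} is a genuine analytic (polynomial) change of variables, and the remainders $\boldsymbol{G}^\pm_{N+2}$, $\boldsymbol{R}^\pm_N$ belong to the transformed system. So $V_k\sim V_k^\ast$ holds exactly, and $N$ enters only through the cited relations. Also, the $(1,1)$ entry of the linear part of \eqref{geni} is $1$, so $h'(0)=1$.
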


\begin{proof}
With \eqref{Lyanff}, \eqref{Lyanfp} and \eqref{Lyanpp}, we only need to show that the Lyapunov constants before and after the piecewise coordinate transformation~\eqref{geni} and piecewise time scaling~\eqref{getsc} are algebraically equivalent.

We first consider the influence of the piecewise time scaling~\eqref{getsc} on the Lyapunov constants.
Since $K^\pm(x,y)>0$, it is not hard to verify that
\eqref{getsc} does not change the orbital configuration of the two subsystems according to \cite{KUZ}.
Thus, it preserves the two half return maps of the monodromic singular point $O$.
Consequently, it further preserves the difference function $\Delta(x)$, and hence the Lyapunov constants.

The effect of the piecewise coordinate transformation~\eqref{geni} on the first nonzero Lyapunov constant is analyzed in following three steps.

{\bf Step 1}. Show that the half return maps before and after \eqref{geni} are conjugate.

For the monodromic singular point $O$ of the piecewise-smooth system~\eqref{ge}, we expand the upper half return map $\Pi^+(x)$ and the inverse of the lower half-return map $(\Pi^-)^{-1}(x)$ at $x=0$ as power series
\begin{equation*}
\Pi^+(x)=\sum_{k=1}^{+\infty}\Gamma_k^+ x^k~~\text{and}~
(\Pi^-)^{-1}(x)=\sum_{k=1}^{+\infty}\Gamma_k^- x^k,
\end{equation*}
respectively.
Thus, from \eqref{DeltaF} we get the difference function
\begin{equation*}
\Delta(x)=(\Pi^-)^{-1}(x)-\Pi^+(x)=\sum_{k=1}^{+\infty}
\left(\Gamma_k^--\Gamma_k^+\right)x^k,
\end{equation*}
and the Lyapunov constants $V_k=\Gamma_k^--\Gamma_k^+$ ($k\ge1$).

Since $q_1^\pm>0$, the coordinate transformation \eqref{geni} maps the upper (resp. lower) subsystem of system~\eqref{ge} to the upper (resp. lower) subsystem of the new one.
Hence, we can similarly define the upper half return map (denoted by $\hat{\Pi}^+(x)$) and the lower half return map (denoted by $\hat{\Pi}^-(x)$),
associated with the monodromic singular point $O$ of the new system.
By expanding $\hat{\Pi}^+(x)$ and $(\hat{\Pi}^-)^{-1}(x)$ at $x=0$ as power series
\begin{equation*}
 \hat{\Pi}^+(x)=\sum_{k=1}^{+\infty}\hat{\Gamma}_k^+ x^k~{\rm and}~
(\hat{\Pi}^-)^{-1}(x)=\sum_{k=1}^{+\infty}\hat{\Gamma}_k^- x^k
\end{equation*}
respectively, we obtain that the difference function becomes
\begin{equation*}
\Delta^\ast(x)=( \hat{\Pi}^-)^{-1}(x)- \hat{\Pi}^+(x)=\sum_{k=1}^{+\infty}
    \left(\hat{\Gamma}_k^--\hat{\Gamma}_k^+\right)x^k.
\end{equation*}
Thus, the $k$-th Lyapunov constant (denoted by $V^\ast_k$) is equal to   $\hat{\Gamma}_k^--\hat{\Gamma}_k^+$ for $k\ge1$.

\begin{figure}[h]
\centering
\includegraphics[width=8.2cm]{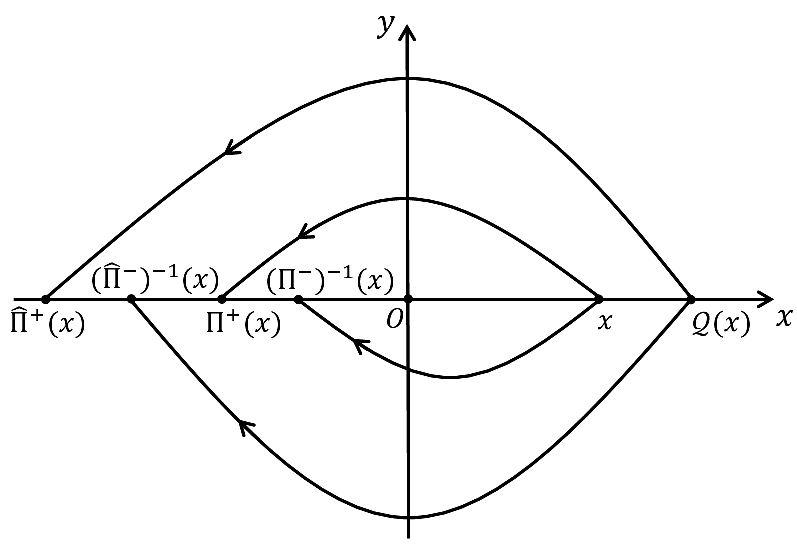}
\caption{Corresponding half-return maps of the original and new systems.}\label{oanrm}
\end{figure}

It follows from $\Phi^+(x,0)=\Phi^-(x,0)$ that the transformation~\eqref{geni} maps the point $(x,0)$ on the switching line to $(\mathcal{Q}(x),0)$, where $\mathcal{Q}(x):=x+\Phi^+(x,0)$.
From the relationship between the half return maps of the original and new systems as illustrated in Figure \ref{oanrm}, one can verify that
\begin{equation}\label{Gcoefpo}
\hat{\Pi}^+(x)=\mathcal{Q}^{-1}\circ \Pi^+\circ \mathcal{Q}(x)
\end{equation}
and
\begin{equation}\label{Gcoefne}
(\hat{\Pi}^-)^{-1}(x)=\mathcal{Q}^{-1}\circ (\Pi^-)^{-1}\circ \mathcal{Q}(x).
\end{equation}
This indicates that the half return maps of the piecewise monodromic system~\eqref{ge} before and after the piecewise coordinate transformation \eqref{geni} are conjugate.
Thus, we complete Step 1.

{\bf Step 2}: Derive the relation between $\Gamma_k^\pm$ and $\hat{\Gamma}_k^\pm$.

Since the relation for $\Gamma_k^-$ and $\hat{\Gamma}_k^-$ can be obtained analogously,
we only analyze the relation between $\Gamma_k^+$ and $\hat{\Gamma}_k^+$ in the following.
Expand $\mathcal{Q}(x)$ at $x=0$ as a power series
\begin{equation*}
\mathcal{Q}(x)=x+\sum_{k=2}^{+\infty}\Upsilon_kx^k.
\end{equation*}
From \eqref{Gcoefpo}, it follows that
\begin{equation}\label{Ggohm}
\mathcal{Q}\circ \hat{\Pi}^+(x)=\Pi^+\circ \mathcal{Q}(x).
\end{equation}
For the left-hand side of \eqref{Ggohm}, a straightforward computation yields
\begin{align*}
\mathcal{Q}\circ \hat{\Pi}^+(x)
  &=\sum_{k=1}^{+\infty}\hat{\Gamma}_k^+ x^k+\sum_{k=2}^{+\infty}
  \Upsilon_k\left(\sum_{j=1}^{+\infty}\hat{\Gamma}_j^+ x^j\right)^k\\
  &=\sum_{k=1}^{+\infty}\hat{\Gamma}_k^+x^k
  +\sum_{k=2}^{+\infty}\Upsilon_k\sum_{i=k}^{+\infty}
  \hat{B}_{i,k}\left(\hat{\Gamma}_1^+,...,\hat{\Gamma}_{i-k+1}^+\right)x^i\\
  &=\sum_{k=1}^{+\infty}\hat{\Gamma}_k^+x^k+\sum_{k=2}^{+\infty}
  \sum_{i=2}^{k}\Upsilon_i
  \hat{B}_{k,i}\left(\hat{\Gamma}_1^+,...,\hat{\Gamma}_{k-i+1}^+\right)x^k\\
  &=\hat{\Gamma}_1^+x+\sum_{k=2}^{+\infty}\left\{\hat{\Gamma}_k^+
  +\sum_{i=2}^{k}\Upsilon_i\hat{B}_{k,i}
  \left(\hat{\Gamma}_1^+,...,\hat{\Gamma}_{k-i+1}^+\right)\right\}x^k.
\end{align*}
By setting $\Upsilon_1:=1$,
for the right-hand side of \eqref{Ggohm} it is not hard to derive
\begin{align*}
\Pi^+\circ \mathcal{Q}(x)&=\sum_{k=1}^{+\infty}\Gamma_k^+
    \left(\sum_{j=1}^{+\infty}\Upsilon_jx^j\right)^k \\
    &=\sum_{k=1}^{+\infty}\Gamma_k^+\sum_{i=k}^{+\infty}
    \hat{B}_{i,k}\left(\Upsilon_1,...,\Upsilon_{i-k+1}\right)x^i\\
    &=\sum_{k=1}^{+\infty}\sum_{i=1}^{k}\Gamma_i^+\hat{B}_{k,i}
    \left(\Upsilon_1,...,\Upsilon_{k-i+1}\right)x^k\\
    &=\Gamma_1^+x+\sum_{k=2}^{+\infty}\sum_{i=1}^{k}\Gamma_i^+\hat{B}_{k,i}
    \left(\Upsilon_1,...,\Upsilon_{k-i+1}\right)x^k.
\end{align*}
Thus, by comparing the coefficients of $x^k$ on both sides of \eqref{Ggohm}, we obtain $\hat{\Gamma}_1^+=\Gamma_1^+$ and
\begin{equation}\label{Hatgampo}
\begin{aligned}
    \hat{\Gamma}_k^+
  +\sum_{i=2}^{k}\Upsilon_i\hat{B}_{k,i}
  \left(\hat{\Gamma}_1^+,...,\hat{\Gamma}_{k-i+1}^+\right)
  &=\sum_{i=1}^{k}\Gamma_i^+\hat{B}_{k,i}
    \left(\Upsilon_1,...,\Upsilon_{k-i+1}\right)\\
  &=\Gamma_k^++\sum_{i=1}^{k-1}\Gamma_i^+\hat{B}_{k,i}
    \left(\Upsilon_1,...,\Upsilon_{k-i+1}\right)
  \end{aligned}
\end{equation}
for $k\ge2$.
Similarly, from \eqref{Gcoefne} we can conclude $\hat{\Gamma}_1^-=\Gamma_1^-$ and
\begin{equation}\label{Hatgamne}
\hat{\Gamma}_k^-
  +\sum_{i=2}^{k}\Upsilon_i\hat{B}_{k,i}
  \left(\hat{\Gamma}_1^-,...,\hat{\Gamma}_{k-i+1}^-\right)
  =\Gamma_k^-+\sum_{i=1}^{k-1}\Gamma_i^-\hat{B}_{k,i}
    \left(\Upsilon_1,...,\Upsilon_{k-i+1}\right)
\end{equation}
for $k\ge2$.
This completes Step 2.

{\bf Step 3}: Prove $V_k\sim V^\ast_k$.

Since $V_k=\Gamma_k^--\Gamma_k^+$ and $V^\ast_k=\hat{\Gamma}_k^--\hat{\Gamma}_k^+$ for $k\ge1$,
subtracting \eqref{Hatgampo} from \eqref{Hatgamne} yields
\begin{equation}\label{Rshibhv}
V^\ast_k+\sum_{i=2}^{k}\Upsilon_i\Delta\hat{B}_{k,i}=
  V_k+\sum_{i=1}^{k-1}V_i
  \hat{B}_{k,i}\left(\Upsilon_1,...,\Upsilon_{k-i+1}\right),~~\forall k\ge2,
\end{equation}
where
\begin{equation*}
 \Delta\hat{B}_{k,i}:=\hat{B}_{k,i}
  \left(\hat{\Gamma}_1^-,...,\hat{\Gamma}_{k-i+1}^-\right)-\hat{B}_{k,i}
  \left(\hat{\Gamma}_1^+,...,\hat{\Gamma}_{k-i+1}^+\right).
\end{equation*}
We claim that for any $k\ge1$, if $V_1=\cdots=V_k=0$, then $V^\ast_1=\cdots=V^\ast_k=0$.
In fact,
suppose $m=1$ and $V_1=0$,
then we get
\begin{equation*}
 V^\ast_1=\hat{\Gamma}_1^--\hat{\Gamma}_1^+=\Gamma_1^--\Gamma_1^+=V_1=0.
\end{equation*}
Assume the claim holds for $k=m-1$.
That is,
if $V_1=\cdots=V_{m-1}=0$
then $V^\ast_1=\cdots=V^\ast_{m-1}=0$,
which implies $\hat{\Gamma}_j^-=\hat{\Gamma}_j^+$ for all $j\in\{1,...,m-1\}$.
Consequently,
\begin{equation*}
 \Delta\hat{B}_{m,i}=\hat{B}_{m,i}
  \left(\hat{\Gamma}_1^-,...,\hat{\Gamma}_{m-i+1}^-\right)-\hat{B}_{m,i}
  \left(\hat{\Gamma}_1^+,...,\hat{\Gamma}_{m-i+1}^+\right)=0,~~\forall i\in\{2,...,m\}.
\end{equation*}
Combining this with \eqref{Rshibhv} and $V_1=\cdots=V_{m-1}=0$,
we obtain
\begin{equation}\label{VMBki}
 V^\ast_m=V_m+\sum_{i=1}^{m-1}V_i
  \hat{B}_{m,i}\left(\Upsilon_1,...,\Upsilon_{m-i+1}\right)=V_m,
\end{equation}
and hence, if $V_m=0$ then $V^\ast_m=0$.
By mathematical induction, our claim is proved.
For the first nonzero Lyapunov constant $V_k$ of the monodromic singular point $O$ of system \eqref{ge},
we get $V^\ast_1=\cdots=V^\ast_{k-1}=0$ by our claim and $V^\ast_k=V_k$
by \eqref{VMBki}.
This shows that the coordinate transformation~\eqref{geni} does not change the first nonzero Lyapunov constant.
Therefore, $V_k\sim V^\ast_k$ and
Step 3 is finished.
The proof is completed.
\end{proof}

As indicated in \cite{LIJ}, the coefficients $\Upsilon_k$ ($k\ge2$) in  $\mathcal{Q}(x)$ play an important role in the reduction process,
because the sub-maps individually constructed to reduce the subsystems of \eqref{ge}
are glued together to form a homeomorphism via these coefficients.
This guarantees that the normal forms are obtained in the sense of topological equivalence.
For this reason, we call the coefficients $\Upsilon_k$ ($k\ge2$) the {\it gluing coefficients}.
Theorem~\ref{dinfulvk} further implies that the gluing coefficients are important in proving that the transformation~\eqref{geni} does not change the first nonzero Lyapunov constant.

On the other hand,
Theorem~\ref{dinfulvk} indicates that the normal form method allows us to compute the normal form series instead of Lyapunov constants.
Thereby, the cumbersome integrals of trigonometric functions can be avoided,
and a general algorithm for the normal form coefficient series of the piecewise-smooth monodromic system~\eqref{ge} will be introduced in Section 4.

Based on Theorem~\ref{dinfulvk}, we develop the normal form method to study the relationship between the stability of focus of the original system and the stabilities of foci of its subsystems, as stated in the following theorem.

\begin{thm}\label{Dpsbst}
   For the piecewise-smooth monodromic system~\eqref{ge},
   suppose that the origin $O$ is a stable (resp. an unstable) focus of order $\varsigma^+\in\mathbb{N}$ for the upper subsystem.
   The following statements hold.
\begin{description}
\item[(a)] If $O$ is a stable (resp. an unstable) focus of order $\varsigma^-\in\mathbb{N}$ for the lower subsystem, then
    $O$ is a focus of system~\eqref{ge} of order $\varsigma\in
    \bigcup_{i=1}^{\mathcal{M}}\left\{\frac{2i-1}{2}\right\}\cup\{\mathcal{M}\}$, and it
is stable (resp. unstable) when $\varsigma=\mathcal{M}$, where $\mathcal{M}:=\min\{\varsigma^+,\varsigma^-\}$.
\item[(b)] If $O$ is an invisible tangent point of the lower subsystem, then $O$ is
a focus of system~\eqref{ge} of order $\varsigma\in \bigcup_{i=1}^{\varsigma^+}\left\{\frac{2i-1}{2}\right\}\cup\{\varsigma^+\}$, and it
is stable (resp. unstable) when $\varsigma=\varsigma^+$.
\end{description}
\end{thm}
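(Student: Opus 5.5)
The plan is to reduce everything to the normal forms \eqref{NFF} and \eqref{NFP} via Theorem~\ref{dinfulvk}. The key intermediate claim is that, in the FF or FP normal form of system~\eqref{ge}, the upper coefficients $\gamma_k^+$ encode the focus order and stability of the upper subsystem alone, and likewise $\gamma_k^-$ for the lower subsystem in the FF case. Precisely: if $O$ is a focus of order $\varsigma^+$ of the upper subsystem, then
\[
\gamma_1^+=\gamma_3^+=\cdots=\gamma_{2\varsigma^+-1}^+=0,\qquad \gamma_{2\varsigma^++1}^+\neq 0,
\]
and the sign of $\gamma_{2\varsigma^++1}^+$ is negative (resp. positive) for a stable (resp. unstable) focus. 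The even-indexed $\gamma^+_{2i}$ are unconstrained.

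To prove the claim, I would pass to polar coordinates in the truncated normal form. Since $x\cdot y^i y-y\cdot y^i x=0$, the nonlinear terms $\sum\gamma_{i+1}^+y^i(x,y)^T$ are purely radial. Hence $\dot\theta=1+O(r^{N+1})$ and
\[
\frac{dr}{d\theta}=r\Big(\gamma_1^++\sum_{i\ge1}\gamma_{i+1}^+r^i\sin^i\theta\Big)+\cdots .
\]
Next I would consider the smooth analytic field obtained by using the upper normal-form field on the whole plane. Applying Theorem~\ref{dinfulvk} to it (identical halves) gives the series $\gamma_k^++(-1)^{k-1}\gamma_k^+$, which equals $2\gamma_k^+$ for odd $k$ and $0$ for even $k$. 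This is consistent with $\int_0^{2\pi}\sin^i\theta\,d\theta=0$ for odd $i$. So the smooth order of that field is read off from the first nonzero odd-indexed $\gamma^+_k$, and $V_{2\varsigma^++1}\sim 2\gamma^+_{2\varsigma^++1}$ carries the correct sign, because algebraic equivalence preserves sign.

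The main obstacle is identifying this auxiliary smooth field with the upper subsystem of \eqref{ge}. The reduction in \cite{LIJ} on $y\ge0$ is built from the upper vector field together with the gluing map $\mathcal{Q}$. I would argue that the same upper half-transformation, paired with a lower half-transformation built from the upper field on $y\le 0$ and the same $\mathcal{Q}$, reduces the upper field (viewed as a smooth system on the plane) to a form whose $y>0$ part has the same coefficients $\gamma_k^+$. Its $y<0$ coefficients then coincide with $\gamma^+_k$ in the odd positions, by the symmetry of the displacement computed above. If this identification is delicate, my fallback is the composition $\Pi_u^-\circ\Pi^+$, where $\Pi_u^-$ denotes the lower half map of the upper field. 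This reproduces the smooth return map of the upper subsystem and lets the odd-order cancellation be checked directly on the half-map expansions $\Gamma_k^\pm$ used in the proof of Theorem~\ref{dinfulvk}.

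With the claim in hand, (a) and (b) follow by index bookkeeping.

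For (a), FF type: the series is $c_k=\gamma_k^++(-1)^{k-1}\gamma_k^-$, so $c_{2i+1}=\gamma_{2i+1}^++\gamma_{2i+1}^-$ and $c_{2i}=\gamma_{2i}^+-\gamma_{2i}^-$. For $2i+1<2\mathcal M+1$ both summands vanish, hence $c_{2i+1}=0$. At $k=2\mathcal M+1$, either one summand vanishes and the other is nonzero, or both are nonzero with the same sign by the stability hypothesis. Either way $c_{2\mathcal M+1}\ne0$ and has the common sign. Therefore the first nonzero $c_k$ has index $2i$ with $1\le i\le\mathcal M$, giving order $(2i-1)/2$, or index $2\mathcal M+1$, giving order $\mathcal M$. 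In the latter case Theorem~\ref{dinfulvk} transfers the sign to $V_{2\mathcal M+1}$, yielding the stated stability.

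For (b), FP type: the series is $\gamma_k^+$ itself. By the claim, the odd entries vanish below index $2\varsigma^++1$ and $\gamma^+_{2\varsigma^++1}$ is nonzero with the sign of the upper focus. The same reasoning then gives order in $\bigcup_{i=1}^{\varsigma^+}\{\frac{2i-1}{2}\}\cup\{\varsigma^+\}$, with the stated stability when $\varsigma=\varsigma^+$.
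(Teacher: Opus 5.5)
Your index bookkeeping for (a) and (b) is the paper's argument. You reduce to \eqref{NFF} or \eqref{NFP}, transfer signs through Theorem~\ref{dinfulvk} and \eqref{Lyanff}/\eqref{Lyanfp}, and split on whether some $\gamma_{2i}^+-\gamma_{2i}^-$ (resp. $\gamma_{2i}^+$) with $i\le\mathcal M$ (resp. $i\le\varsigma^+$) is nonzero. The difference is your key claim, namely that $\gamma^\pm_1=\gamma^\pm_3=\cdots=\gamma^\pm_{2\varsigma^\pm-1}=0$ and $\gamma^\pm_{2\varsigma^\pm+1}$ carries the sign of the subsystem focus. The paper does not prove this; it quotes it from \cite[Corollary 4.1]{LIJ}.

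Your derivation of the claim has a gap at exactly the step you flag. The auxiliary field and its series $2\gamma_k^+$ (odd $k$), $0$ (even $k$) are fine. But you justify that the $y<0$ coefficients of your second reduction equal $\gamma_k^+$ in odd positions ``by the symmetry of the displacement computed above''. That is circular: the displacement was computed for the auxiliary field, and relating it to the reduction of the upper subsystem is the very identification you need. The fallback via $\Pi_u^-\circ\Pi^+$ does recover the smooth return map of the upper subsystem. However, nothing in it links the half-map coefficients $\Gamma_k^\pm$ to the normal-form coefficients $\gamma_k^+$, so it does not close the gap either.

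The fix is short. In the construction of \cite{LIJ} (cf.\ Algorithm~1), the upper half of \eqref{geni} is $L_f$ composed with the polynomial maps \eqref{NisSff}. The factor $K^+$ in \eqref{getsc} is analytic near $O$ with $K^+(0,0)>0$. So both are defined on a full neighborhood of $O$.

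Apply this same map and time scaling on both half-planes. This is a transformation of type \eqref{geni} with $\Phi^-=\Phi^+$, $\Psi^-=\Psi^+$ and $q_i^-=q_i^+$, so $\Psi^\pm(x,0)=0$ and the gluing condition hold trivially. It carries the upper subsystem, regarded as an analytic system on the whole neighborhood, exactly onto your auxiliary field. The lower coefficients of that field are then $\gamma_k^+$ at every index, not just odd ones.

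Steps 1--3 of the proof of Theorem~\ref{dinfulvk} use only the structure of \eqref{geni}. They (or classical invariance of Lyapunov quantities under analytic orbital equivalence) give $V_k\sim\gamma_k^++(-1)^{k-1}\gamma_k^+$ for the upper subsystem, which is your claim. The same argument handles the lower focus in (a), since the lower reduction can also be run directly on a neighborhood (Step 2 of Section 4.1).

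Finally, fix $N\ge 2\mathcal M$ (resp. $N\ge 2\varsigma^+$). Otherwise $\gamma^\pm_{2\mathcal M+1}$ (resp. $\gamma^+_{2\varsigma^++1}$) is not among the coefficients $\gamma_1,\dots,\gamma_{N+1}$ of the truncated normal form.
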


\begin{proof}
For conclusion {\bf (a)},
we prove the case where the origin $O$ is a stable focus of both subsystems.
The other case can be proved similarly and is omitted here.
By choosing a positive integer $N>\mathcal{M}$,
one can reduce the piecewise-smooth system~\eqref{ge} to the FF normal form~\eqref{NFF}.
Since $O$ is a stable focus of order $\varsigma^\pm$ for the corresponding subsystem,
it follows from \cite[Corollary 4.1]{LIJ} that the coefficients $\gamma_{2i+1}^\pm$ ($i=0,...,\varsigma^\pm$) in the FF normal form~\eqref{NFF} satisfy $\gamma_{2i+1}^\pm=0$ for all $i\in\{0,...,\varsigma^\pm-1\}$ and $\gamma_{2\varsigma^\pm+1}^\pm<0$.

If
$\gamma_{2i}^+=\gamma_{2i}^-$ for all $i\in\{1,...,\mathcal{M}\}$,
combining \eqref{Lyanff} with Theorem~\ref{dinfulvk}, for system~\eqref{ge}
we obtain that the first nonzero Lyapunov constant of $O$ has subscript $2\mathcal{M}+1$, and satisfies
\begin{equation*}
\operatorname{sgn}\left(V_{2\mathcal{M}+1}\right)=\left\{\begin{aligned}
&\operatorname{sgn}\left(\gamma_ {2\varsigma^++1}^+\right),~&&\text{if } \varsigma^+<\varsigma^-,\\
&\operatorname{sgn}\left(\gamma_ {2\varsigma^++1}^++\gamma_ {2\varsigma^-+1}^-\right),&&\text{if } \varsigma^+=\varsigma^-,\\
&\operatorname{sgn}\left(\gamma_{2\varsigma^-+1}^-\right),
&&\text{if } \varsigma^+>\varsigma^-.
\end{aligned}\right.
\end{equation*}
This implies that the origin $O$ is a stable focus of order $\mathcal{M}$ for system \eqref{ge} because $V_{2\mathcal{M}+1}<0$ due to $\gamma_{2\varsigma^\pm+1}^\pm<0$.
If there exists $i\in\{1,...,\mathcal{M}\}$ such that $\gamma_{2j}^+=\gamma_{2j}^-$ for all $j\in\{1,...,i\}$ and $\gamma_ {2i}^+\ne\gamma_ {2i}^-$,
then the first nonzero Lyapunov constant of $O$ has subscript $2i$ and satisfies
\begin{equation*}
 \operatorname{sgn}\left( V_{2i}\right)=\operatorname{sgn} \left(\gamma_ {2i}^+-\gamma_ {2i}^-\right)\ne0
\end{equation*}
by \eqref{Lyanff} and Theorem~\ref{dinfulvk}.
Consequently, when $\gamma_ {2i}^+<\gamma_ {2i}^-$ (resp. $\gamma_ {2i}^+>\gamma_ {2i}^-$),
the origin $O$ is a stable (resp. an unstable) focus of order $(2i-1)/2$ for system~\eqref{ge}.
Therefore, the proof of conclusion {\bf (a)} is completed.

For conclusion {\bf (b)},
by choosing a positive integer $N>\varsigma^+$
we reduce the piecewise-smooth system~\eqref{ge} into the FP normal form \eqref{NFP}.
Since the origin $O$ is a focus of order $\varsigma^+$ for the upper subsystem,
the coefficients $\gamma_{2i+1}^+$ ($i=0,...,\varsigma^+$) in the FP normal form~\eqref{NFP} satisfy $\gamma_{2i+1}^+=0$ for all $i\in\{0,...,\varsigma^+-1\}$ and $\gamma_{2\varsigma^++1}^+\ne0$
by \cite[Corollary 4.1]{LIJ}.

If $\gamma_{2i}^+=0$ for all $i\in\{1,...,\varsigma^+\}$,
then we obtain that the first nonzero Lyapunov constant of $O$ for system~\eqref{ge} has subscript $2\varsigma^++1$, and satisfies
\begin{equation*}
 \operatorname{sgn}\left( V_{2\varsigma^++1}\right)=\operatorname{sgn}\left( \gamma^+_{2\varsigma^++1}\right)
\end{equation*}
from \eqref{Lyanfp} and Theorem~\ref{dinfulvk}.
This indicates that the origin $O$ is a stable (resp. an unstable) focus of order $\varsigma^+$ provided that
$\gamma_{2\varsigma^++1}^+<0$ (resp. $\gamma_{2\varsigma^++1}^+>0$),
i.e., $O$ is a stable (resp. an unstable) focus of order $\varsigma^+$ for the upper subsystem.
If there exists $i\in\{1,...,\varsigma^+\}$ such that $\gamma_{2j}^+=0$ for all $j\in\{1,...,i\}$ and $\gamma_ {2i}^+\ne0$,
then the first nonzero Lyapunov constant of $O$ has subscript $2i$ and satisfies
\begin{equation*}
  \operatorname{sgn}\left(V_{2i}\right)=\operatorname{sgn}\left(\gamma_ {2i}^+\right)
\end{equation*}
according to \eqref{Lyanfp} and Theorem~\ref{dinfulvk}.
Consequently, when $\gamma_ {2i}^+<0$ (resp. $\gamma_ {2i}^+>0$),
the origin $O$ is a stable (resp. an unstable) focus of order $(2i-1)/2$ for system~\eqref{ge}.
Thus, we prove conclusion {\bf (b)} and finish the proof of this theorem.
\end{proof}

Note that in Theorem~\ref{Dpsbst},
we ignore the case where $O$ is a PP type monodromic singular point of system~\eqref{ge},
because it follows from \eqref{Lyanpp} that there is no relationship between the multiplicities of the tangent points $O$ of the subsystems
and the order as well as the stability of $O$ for \eqref{ge}.
In fact, one can easily construct examples where the order $\varsigma$ is greater than, less than, or equal to the multiplicity of the tangent point.

Recently, a specific piecewise-smooth system has been found in \cite{EM2} that takes the origin $O$ as an unstable focus of order $1/2$, while its subsystems take $O$ as a stable focus of order $1$.
It is worth mentioning that Theorem~\ref{Dpsbst} generalizes this result from a specific system to general ones, from FF type to FF and FP types, and from order $1$ to arbitrary orders.
In fact, according to the proof of Theorem~\ref{Dpsbst},
it is not hard to find that for FF type,
two stable (resp. unstable) foci of arbitrary orders can form an unstable (resp. stable) focus of the whole system with any fractional order less than the minimum of their orders;
for FP type,
a stable (resp. unstable) focus of arbitrary order together with a tangent point can form an unstable (resp. stable) focus of the whole system with any fractional order less than the order of the focus.
This result implies that the normal form method can be used to design stabilizing switching signals for switched systems with all unstable subsystems (see Section 5).

\section{Algorithms for computing normal form coefficient series}
\setcounter{equation}{0}
\setcounter{lm}{0}
\setcounter{thm}{0}
\setcounter{rmk}{0}
\setcounter{df}{0}
\setcounter{cor}{0}

In this section, the algorithms for computing normal form coefficient series based on \cite{LIJ} are provided as pseudocode, which can be readily implemented using computer algebra systems such as Maple and Mathematica.
Note that we have implemented those algorithms in Maple, and readers interested in our Maple code can contact the corresponding author by e-mail.
To simplify the notation, in this section we omit the superscripts $+$ and $-$ used to distinguish the upper and lower subsystems of the piecewise-smooth monodromic system~\eqref{ge} in the reduction process of subsystems.

\subsection{Algorithm for FF type}

In this subsection, we introduce the algorithms for computing FF normal form coefficient series in the following three steps.
For the piecewise-smooth system~\eqref{ge}, suppose that the origin $O$ is a monodromic singular point of FF type.

{\bf Step 1}. Compute the normal form coefficients $\gamma_k^+$.

According to~\cite{CX},
the linear part of the upper subsystem can be changed into
\begin{equation*}
  A:=\left(
    \begin{array}{cc}
      \alpha & -\beta \\
      \beta & \alpha \\
    \end{array}
  \right)
\end{equation*}
by a linear transformation
\begin{equation*}
 L_f: \left(\begin{array}{c}
      x \\
      y \\
    \end{array}
  \right)\to
   \left(
     \begin{array}{cc}
       1 & \frac{1}{\beta}\frac{\partial X(0,0)}{\partial x}
  -\frac{1}{\beta}\frac{\partial Y(0,0)}{\partial y} \\
       0 & \frac{2}{\beta}\frac{\partial Y(0,0)}{\partial x} \\
     \end{array}
   \right)
   \left(\begin{array}{c}
     x \\
     y \\
   \end{array}
   \right),
\end{equation*}
where $\alpha\pm \beta i$ are eigenvalues of the Jacobian matrix.
Then the first FF normal form coefficient $\gamma_1^+$ equals $\alpha/\beta$.
Since $T$ is explicitly constructed in \cite{CX},
the higher-order terms after this transformation can be easily computed.

From \cite[Lemma 2.2]{LIJ}, we can find a linear transformation $L_f$ and a series of near-identity transformations
\begin{equation}\label{NisSff}
\left(\begin{array}{c}
x \\
y \\
\end{array}
\right)\to
\left(
\begin{array}{c}
x \\
y \\
\end{array}
\right)+
\left(\begin{array}{c}
\sum\limits_{j=0}^{k}p_{k-j,j}x^{k-j} y^{j} \\
\sum\limits_{j=1}^{k}q_{k-j,j}x^{k-j} y^{j} \\
\end{array}\right),~~k=2,...,m
\end{equation}
that rewrite the upper subsystem of \eqref{ge} as
\begin{equation}\label{mp1for}
\begin{aligned}
\left(\begin{array}{c}
\dot{x} \\
\dot{y} \\
\end{array}\right)=&A
\left(\begin{array}{c}
x \\
y \\
\end{array}\right)
+\sum_{k=1}^{m-1}y^k
\left(\begin{array}{cc}
\nu_{k+1} & -\eta_{k+1} \\
\eta_{k+1} & \nu_{k+1} \\
\end{array}\right)\left(\begin{array}{c}
x \\
y \\
\end{array}\right)\\
&+
\left(\begin{array}{c}
\sum\limits_{i+j= m+1} \hat{a}_{ij} x^iy^j \\
\sum\limits_{i+j= m+1} \hat{b}_{ij} x^iy^j \\
\end{array}\right)
+o\left(|(x,y)|^{m+1}\right).
\end{aligned}
\end{equation}
To compute the normal form coefficient $\gamma_{m+1}^+$,
our task is to find a near-identity transformation
\begin{equation}\label{Hmnio}
\left(\begin{array}{c}
x \\
y \\
\end{array}
\right)\to
\left(
\begin{array}{c}
x \\
y \\
\end{array}
\right)+
\left(\begin{array}{c}
\sum\limits_{j=0}^{m+1}p_{m+1-j,j}x^{m+1-j} y^{j} \\
\sum\limits_{j=1}^{m+1}q_{m+1-j,j}x^{m+1-j} y^{j}\\
\end{array}\right)
\end{equation}
carrying the $(m+1)$-th order terms of system~\eqref{mp1for} into
\begin{equation*}
y^{m}\left(\begin{array}{cc}
\nu_{m+1} & -\eta_{m+1} \\
\eta_{m+1} & \nu_{m+1} \\
\end{array}\right)
\left(\begin{array}{c}
x \\
y \\
\end{array}\right),
\end{equation*}
and thereby the coefficient $\gamma_{m+1}^+$ can be computed by the explicit formula given in \cite{LIJ}.
Associated with this, the following algorithm is designed to compute the normal form coefficient $\gamma_{m+1}^+$ based on the proof of \cite[Lemma 2.2]{LIJ}.
\begin{algorithm}
\caption{Computation of the normal form coefficient $\gamma_{m+1}$}
\begin{algorithmic}[1]
\Require$C$~and~$\hat{a}_{m+1-j,j}, \hat{b}_{m+1-j,j}$ ($j=0,...,m+1$)
\Comment{$C$ denotes the value assigned to the gluing coefficient}
\State $p_{m+1,0}\gets C$, $q_{m+1,0}\gets 0$
\State $p_{m,1}\gets\frac{\hat{a}_{m+1,0}}{\beta}-\frac{m\alpha C}{ \beta}$,
$q_{m,1}\gets\frac{\hat{b}_{m+1,0}}{\beta}+C$
  \For{$k=1,...,m-1$}
\State    $p_{m-k,k+1}\gets \frac{\hat{a}_{m-k+1,k}+(m-k+2)\beta p_{m-k+2,k-1}-m\alpha p_{m-k+1,k}-\beta q_{m-k+1,k}}{(k+1)\beta}$
\State    $q_{m-k,k+1}\gets\frac{\hat{b}_{m-k+1,k}+{\rm sgn}(k-1)(m-k+\!2)\beta q_{m-k+2,k-1}-m\alpha q_{m-k+1,k}+\beta p_{m-k+1,k}}{(k+1)\beta}$
\EndFor
\State $\Upsilon_1\gets\hat{a}_{1,m}-\hat{b}_{0,m+1}+2\beta \left(p_{2,m-1}-q_{1,m}\right)-m\alpha p_{1,m}$
\State  $\Upsilon_2\gets\hat{a}_{0,m+1}+\hat{b}_{1,m}+2\beta \left(p_{1,m}+q_{2,m-1}\right)-m\alpha q_{1,m}$
\State    $p_{0,m+1}\gets\frac{(m+2)\beta \Upsilon_1+m\alpha \Upsilon_2}{m^2\alpha^2+(m+2)^2\beta^2}$,
\State   $q_{0,m+1}\gets\frac{(m+2)\beta \Upsilon_2-m\alpha \Upsilon_1}{m^2\alpha^2+(m+2)^2\beta^2}$
\State $\nu_{m+1} \gets-m\alpha q_{0,m+1}+\beta p_{0,m+1} +\beta q_{1,m}+\hat{b}_{0,m+1}$
\State $\eta_{m+1} \gets m\alpha p_{0,m+1}+\beta q_{0,m+1} -\beta p_{1,m}-\hat{a}_{0,m+1}$
\State   $T_m\gets\frac{1}{\beta}\left(\eta_{m+1} -\sum_{i=1}^{m-1}T_{m-i}\eta_{i+1}\right)$
\State $\gamma_{m+1}\gets\frac{1}{\beta}\left(\nu_{m+1}-
    \sum_{i=1}^{m-1}T_{m-i}\nu_{i+1}-\alpha T_m\right)$
\Ensure $p_{m+1-j,j},q_{m+1-j,j}$ ($j=1,...,m+1$)~and~$\gamma_{m+1}$
\end{algorithmic}
\end{algorithm}
The constant $C$ and the coefficients $\hat{a}_{m+1-j,j}, \hat{b}_{m+1-j,j}$ ($j=0,...,m+1$) of system~\eqref{mp1for} are required in {\bf Algorithm 1}.
Here, the constant $C$ denotes the value assigned to the gluing coefficient $p_{m+1,0}$, which will be chosen in the last step.
The coefficients $\hat{a}_{m+1-j,j}, \hat{b}_{m+1-j,j}$ can be computed when the linear transformation $L_f$ and near-identity transformations~\eqref{NisSff} are known.
Note that besides the normal form coefficient, {\bf Algorithm 1} also outputs the coefficients of the transformation~\eqref{Hmnio},
and hence, we can subsequently compute the coefficients $\hat{a}_{m+2-j,j}$, $\hat{b}_{m+2-j,j}$ ($j=0,...,m+2$).
Therefore, the FF normal form coefficients $\gamma_k^+$ for $k\ge2$ can be obtained by repeating the execution of {\bf Algorithm 1}.

In our Maple code, we implement {\bf Algorithm 1} as a Maple procedure called ULF$(m,C)$, where the parameter $m$ means that the normal form coefficient $\gamma_{m+1}^+$ will be computed,
and the parameter $C$ corresponds to the gluing coefficient $p_{m+1,0}$.
In addition, we have designed a Maple procedure, namely UCF$(m,N)$,
to compute the coefficients required in {\bf Algorithm 1},
where the parameter $m$ indicates that the coefficients $\hat{a}_{m+1-j,j}, \hat{b}_{m+1-j,j}$ ($j=1,...,m+1$) will be computed,
and the parameter $N$ indicates that the normal form coefficients are considered up to order $N+1$.

{\bf Step 2}. Compute the normal form coefficients $\gamma_k^-$.

In \cite{LIJ}, the lower subsystem of system~\eqref{ge} is first mapped to the upper half plane $y>0$,
then simplified using \cite[Lemma 2.2]{LIJ},
and finally mapped back to the lower half plane.
However, since both the coordinate transformation~\eqref{Hmnio} and the lower subsystem can be regarded as defined in a small neighborhood of the origin $O$,
we can directly compute the normal form coefficient $\gamma_{m+1}^-$ using {\bf Algorithm 1}.
That is, the process of computation of the normal form coefficients $\gamma_k^-$ ($k\ge1$) is totally similar to that of $\gamma_k^+$.
More precisely, $\gamma_1^-$ can be computed by the formula given in \cite{CX}, and $\gamma_k^-$ ($k\ge2$) can be computed by repeating {\bf Algorithm 1}.
We have designed the Maple procedures LLF$(m,C)$ and LCF$(m,N)$ to compute $\gamma_{m+1}^-$ and the coefficients required in {\bf Algorithm 1}, respectively.

{\bf Step 3}. Compute the normal form coefficient series $\gamma^+_k+(-1)^{k-1}\gamma_k^-$.

According to the proof of \cite[Theorem 1.1]{LIJ}, the gluing coefficients for reducing system~\eqref{ge} to its FF normal form can be arbitrarily chosen.
Thus, we choose them to be all zero for simplifying the calculation,
and give {\bf Algorithm 2} to compute normal form coefficient series $\gamma^+_k+(-1)^{k-1}\gamma_k^-$ up to any given number of terms.
\begin{algorithm}
\caption{Computation of the normal form coefficient series $\{\gamma_k^++(-1)^{k-1}\gamma_k^-\}_{k=1}^{N+2}$ }
\begin{algorithmic}[1]
\Require system and $N$
\Comment{$N$ indicates the desire to compute the first $N+2$ terms}
\State compute $\gamma_1^\pm$ and the new higher-order terms
\State LLF($1$, $0$)
\State ULF($1$, $0$)
\For{$r=2,...,N+1$}
\State LCF($r$, $N+1$)
\State LLF($r$, $0$)
\State UCF($r$, $N+1$)
\State ULF($r$, $0$)
\EndFor
\For{$k=1,...,N+2$}
\State $\gamma_k^++(-1)^{k-1}\gamma_k^-$
\EndFor
\Ensure $\gamma_k^++(-1)^{k-1}\gamma_k^-$, $k=1,...,N+2$
\end{algorithmic}
\end{algorithm}

\begin{rmk}\label{nfceosm}
Since any smooth system with a linear focus can be regarded as a piecewise-smooth system~\eqref{ge} with the same subsystems,
it follows that $V_k \sim (1+(-1)^{k-1})\varpi_k$ by Theorem~\ref{dinfulvk} and \eqref{Lyanff},
where $\varpi_k:=\gamma_k^+=\gamma_k^-$.
It follows that a new normal form series $\{(1+(-1)^{k-1})\varpi_k\}_{k\ge1}$ is obtained for the smooth systems,
which is algebraically equivalent to the Lyapunov constants series and can be computed by {\bf Algorithm 2}.
Therefore, {\bf Algorithm 2} can be used to study the center-focus problem and the Hopf cyclicity problem for smooth systems.
\end{rmk}

\subsection{Algorithm for FP type}

In this subsection, we introduce the algorithms for computing FP normal form coefficient series in the following two steps.
For the piecewise-smooth system~\eqref{ge}, suppose that the origin $O$ is a monodromic singular point of FP type.

{\bf Step 1}. Compute the gluing coefficients $p_{k,0}$.

The gluing coefficients $p_{k,0}$ ($k\ge2$) are obtained in the reduction of the lower subsystem of \eqref{ge}.
Suppose that the origin $O$ is a tangent point of multiplicity $2\ell-1$.
Then, based on the proof of \cite[Lemma 2.3]{LIJ}, we can construct a linear transformation
\begin{equation*}
L_p:  \left(\begin{array}{c}
      x \\
      y \\
    \end{array}
  \right)\to
   \left(
     \begin{array}{cc}
       1 & 0 \\
       0 & \frac{-1}{(2\ell-1)!a_0}\frac{\partial^{2\ell-1}Y(0,0)}{\partial x^{2\ell-1}} \\
     \end{array}
   \right)
   \left(\begin{array}{c}
     x \\
     y \\
   \end{array}
   \right)
\end{equation*}
and a series of near-identity transformations
\begin{equation}\label{Nisfrfp}
\left(\begin{array}{c}
      x \\
      y \\
    \end{array}
  \right)\to
  \left(
   \begin{array}{c}
      x \\
      y \\
    \end{array}
  \right)+
  \left(
    \begin{array}{c}
     \sum\limits_{i=0}^{\lfloor\frac{k+1}{2\ell}\rfloor}p_{k+1-2\ell i,i}x^{k+1-2\ell i}y^i\\
      \sum\limits_{i=1}^{1+\lfloor\frac{k}{2\ell}\rfloor}q_{k+2\ell-2\ell i,i}x^{k+2\ell-2\ell i}y^i \\
    \end{array}
  \right),~~k=1,...,m
\end{equation}
to rewrite the lower subsystem of \eqref{ge} as
\begin{equation}\label{mp1forP}
\left(\!\!\begin{array}{c}
      \dot{x} \\
      \dot{y} \\
    \end{array}
  \!\!\right)\!\!=\!\!
  \left(\!\!\begin{array}{c}
    a_0-\sum\limits_{k=1}^{m}\mu_{k+1} x^k \\
    -a_0x^{2\ell-1}+\sum\limits_{k=1}^{m}\mu_{k+1} x^{k+2\ell-1} \\
  \end{array}\!\!\right)\!+\!\underbrace{\left(\!\!
    \begin{array}{c}
      \sum\limits_{i=0}^{\lfloor\frac{m+1}{2\ell}\rfloor}\hat{a}_{m+1-2\ell i,i}x^{m+1-2\ell i}y^i\\
      \sum\limits_{i=0}^{1+\lfloor\frac{m}{2\ell}\rfloor}\hat{b}_{m+2\ell-2\ell i,i}x^{m+2\ell-2\ell i}y^i \\
    \end{array}
  \!\!\right)}_{=:\mathcal{G}_m}+h.o.t.,
\end{equation}
where $a_0:=X(0,0)$, $\lfloor \cdot \rfloor$ denotes the floor function,
and $h.o.t.$ represents the higher-order terms in the sense of quasi-homogeneity of type $(1,2\ell)$.
Subsequently, we need to find a near-identity transformation
\begin{equation}\label{Hmniop}
  \left(\begin{array}{c}
      x \\
      y \\
    \end{array}
  \right)\to
  \left(
   \begin{array}{c}
      x \\
      y \\
    \end{array}
  \right)+
  \left(
    \begin{array}{c}
    p_{m+2,0}x^{m+2}+ \sum\limits_{i=1}^{\lfloor\frac{m+2}{2\ell}\rfloor}p_{m+2-2\ell i,i}x^{m+2-2\ell i}y^i\\
      \sum\limits_{i=1}^{1+\lfloor\frac{m+1}{2\ell}\rfloor}q_{m+1+2\ell-2\ell i,i}x^{m+1+2\ell-2\ell i}y^i \\
    \end{array}
  \right)
\end{equation}
which maps the term $\mathcal{G}_m$ of system~\eqref{mp1forP} to $(\mu_{m+2}x^{m+1},\mu_{m+2}x^{m+2\ell})^\top$.
Accordingly, the transformation~\eqref{Hmniop} can be computed by the following algorithm, and hence the gluing coefficient $p_{m+2,0}$ is obtained.
\begin{algorithm}
\caption{Computation of the gluing coefficient $p_{m+2,0}$}
\begin{algorithmic}[1] 
\Require  $\hat{a}_{i,j}$~and~$\hat{b}_{i,j}$~in~$\mathcal{G}_m$
\State $r\gets\lfloor\frac{m+1}{2\ell}\rfloor$,
$q\gets\lfloor\frac{m+2}{2\ell}\rfloor$
  \If{$r=0$}
\State  $p_{m+2-2\ell,1}\gets0$,
  $q_{m+1,1}\gets\frac{\hat{b}_{m,1}}{a_0(m+1)}$
\ElsIf{$r>0$ and $2\ell \mid (m+2)$}
\State $p_{m+2-2\ell q,q}\gets0$,
$q_{m+1-2\ell r,r+1}\gets\frac{\hat{b}_{m-2\ell r,r+1}}{a_0(m+1-2\ell r)}$
\ElsIf{$r>0$ and $2\ell \mid (m+1)$}
\State $q_{m+1-2\ell r,r+1}\gets 0$
\Else \State $q_{m+1-2\ell r,r+1}\gets\frac{\hat{b}_{m-2\ell r,r+1}}{a_0(m+1-2\ell r)}$
\EndIf
\For{$i=1,...,r$}
\State
$p_{m+2-2\ell i,i}\gets\frac{\hat{a}_{m+1-2\ell i,i}+{\rm sgn}(q-i)(i+1)p_{m+2-2\ell(i+1),i+1}}{a_0(m+2-2\ell i)}$
\State
$q_{m+1+2\ell-2\ell i,i}\gets\frac{\hat{b}_{m+2\ell-2\ell i,i}+(i+1)q_{m+1-2\ell i,i+1}-(2\ell-1)p_{m+2-2li,i}}{a_0(m+1+2\ell-2\ell i)}$
\EndFor
\State $p_{m+2,0}\gets\frac{\hat{a} _{m+1,0}+\hat{b}_{m+2\ell,0}+a_0(p_{m+2-2\ell,1}+q_{m+1,1})}
 {a_0(m+1+2\ell)}$
 \State
$\mu_{m+2}\gets\hat{b}_{m+2\ell,0}-(2\ell-1)a_0p_{m+2,0}+a_0q_{m+1,1}$
\Ensure $p_{i,j}$~and~$q_{i,j}$ in \eqref{Hmniop}
\end{algorithmic}
\end{algorithm}
Note that in {\bf Algorithm 3},
the coefficients $\hat{a}_{i,j}$ and $\hat{b}_{i,j}$ in $\mathcal{G}_m$ are required,
which can be directly computed once the linear transformation $L_p$ and the near-identity transformations~\eqref{Nisfrfp} are known.
Consequently, by repeating {\bf Algorithm 3}, we get the gluing coefficients $p_{k,0}$ $(k\ge2)$.

In our Maple code, we implement {\bf Algorithm 3} as a Maple procedure called LLP$(m,\ell)$, where the parameter $m$ indicates that the gluing coefficient   $p_{m+2,0}$ will be computed, and
the parameter $\ell$ corresponds to the multiplicity of the tangent point.
To acquire the coefficients required in {\bf Algorithm 3},
we have designed a Maple procedure called LCP$(m,N,\ell)$,
where the parameters $m$ and $\ell$ have the same meaning as LLP$(m,\ell)$,
and the parameter $N$ indicates that the gluing coefficients $p_{k,0}$ ($k=2,...,N+2$) are desired.

{\bf Step 2}. Compute the normal form coefficient series $\gamma^+_k$.

The normal form coefficients $\gamma^+_k$ ($k\ge1$) are obtained from the reduction of the upper subsystem of \eqref{ge}.
The algorithm used for this reduction is consistent with {\bf Algorithm 1}, but with the gluing coefficients set to the values $p_{k,0}$ ($k=2,...,N+2$) obtained in the previous step.
Based on this, we propose {\bf Algorithm 4} for computing the FP normal form coefficient series $\gamma_k^+$ up to any given number of terms.
\begin{algorithm}
\caption{Computation of the normal form coefficient series $\{\gamma_k^+\}_{k=1}^{N+2}$}
\begin{algorithmic}[1] 
\Require system and $N$
\Comment{$N$ indicates the desire to compute the first $N+2$ terms}
\State compute the new form of \eqref{ge} after the linear transformation $L_p$
\State LLP(0, $\ell^-$)
\State ULF(1, $p^-_{2,0}$)
\For{$r=1,...,N$}
\State LCP($r-1$, $N-1$, $\ell^-$)
\State LLP($r$, $\ell^-$)
\State UCF($r$, $N+1$)
\State ULF($r$, $p^-_{r+2,0}$)
\EndFor
\For{$k=1,...,N+2$}
\State $\gamma_k^+$
\EndFor
\Ensure $\gamma^+_k$, $k=1,...,N+2$
\end{algorithmic}
\end{algorithm}

\subsection{Algorithm for PP type}

In this subsection, we introduce the algorithms for computing PP normal form coefficient series in the following two steps.
For the piecewise-smooth system~\eqref{ge}, suppose that the origin $O$ is a monodromic singular point of PP type.

{\bf Step 1}. Compute the gluing coefficients $p_{k,0}$.

As stated in Step 1 of Section 4.2,
the gluing coefficient $p_{k,0}$ can be computed by repeatedly applying {\bf Algorithm 3}.

{\bf Step 2}. Compute the normal form coefficient series $(1+(-1)^k)\sigma_k^+$

The normal form coefficients $\sigma_k^+$ ($k\ge2$) arise from the reduction of the upper subsystem of \eqref{ge}.
In the following, we introduce the process of computation of $\sigma_{m+2}^+$.
According to the proof of \cite[Theorem 1.1]{LIJ}, we can find a linear transformation $L_p$ and a series of near-identity transformations~\eqref{Nisfrfp} that change the upper subsystem of \eqref{ge} into
\begin{equation}\label{NcforPPt}
\left(\!\!\begin{array}{c}
      \dot{x} \\
      \dot{y} \\
    \end{array}
  \!\!\right)\!\!=\!\!
  \left(\!\!\begin{array}{c}
    a_0-\sum\limits_{k=1}^{m}\nu_{k+1} x^k \\
    -a_0x^{2\ell-1}+\sum\limits_{k=1}^{m}\eta_{k+1} x^{k+2\ell-1} \\
  \end{array}\!\!\right)\!+\!\underbrace{\left(\!\!
    \begin{array}{c}
      \sum\limits_{i=0}^{\lfloor\frac{m+1}{2\ell}\rfloor}\hat{a}_{m+1-2\ell i,i}x^{m+1-2\ell i}y^i\\
      \sum\limits_{i=0}^{1+\lfloor\frac{m}{2\ell}\rfloor}\hat{b}_{m+2\ell-2\ell i,i}x^{m+2\ell-2\ell i}y^i \\
    \end{array}
  \!\!\right)}_{=:\mathcal{H}_m}+h.o.t..
\end{equation}
In what follows, we need to construct a near-identity transformation~\eqref{Hmniop} that carries the term $\mathcal{H}_m$ of system~\eqref{NcforPPt} to $(\nu_{m+2}x^{m+1},\eta_{m+2}x^{m+2\ell})^\top$.
Then the normal form coefficient $\sigma_{m+2}^+$ can be explicitly computed via the coefficients $\nu_k, \eta_k$ ($k=2,...m+2$) of system~\eqref{NcforPPt}.
Accordingly, the following {\bf Algorithm 5} is designed to compute $\sigma_{m+2}^+$.
Note that the coefficients $\hat{a}_{i,j}, \hat{b}_{i,j}$ in $\mathcal{H}_m$ and the gluing coefficient $p_{m+2,0}$ are required in {\bf Algorithm 5}.
In fact, $\hat{a}_{i,j}, \hat{b}_{i,j}$ can be directly computed because the linear transformation $L_p$ and near-identity transformations~\eqref{Nisfrfp} are known,
and $p_{m+2,0}$ can be found according to the first step.
 \begin{algorithm}
\caption{Computation of the normal form coefficient $\sigma_{m+2}^+$}
\begin{algorithmic}[1] 
\Require  $\hat{a}_{i,j}, \hat{b}_{i,j}$~in~$\mathcal{H}_m$~and~$p_{m+2,0}$
\State $r\gets\lfloor\frac{m+1}{2\ell}\rfloor$,
$q\gets\lfloor\frac{m+2}{2\ell}\rfloor$
  \If{$r=0$}
\State  $p_{m+2-2\ell,1}\gets0$,
  $q_{m+1,1}\gets\frac{\hat{b}_{m,1}}{a_0(m+1)}$
\ElsIf{$r>0$ and $2\ell \mid (m+2)$}
\State $p_{m+2-2\ell q,q}\gets0$,
$q_{m+1-2\ell r,r+1}\gets\frac{\hat{b}_{m-2\ell r,r+1}}{a_0(m+1-2\ell r)}$
\ElsIf{$r>0$ and $2\ell \mid (m+1)$}
\State $q_{m+1-2\ell r,r+1}\gets 0$
\Else \State $q_{m+1-2\ell r,r+1}\gets\frac{\hat{b}_{m-2\ell r,r+1}}{a_0(m+1-2\ell r)}$
\EndIf
\For{$i=1,...,r$}
\State
$p_{m+2-2\ell i,i}\gets\frac{\hat{a}_{m+1-2\ell i,i}+{\rm sgn}(q-i)(i+1)p_{m+2-2\ell(i+1),i+1}}{a_0(m+2-2\ell i)}$
\State
$q_{m+1+2\ell-2\ell i,i}\gets\frac{\hat{b}_{m+2\ell-2\ell i,i}+(i+1)q_{m+1-2\ell i,i+1}-(2\ell-1)p_{m+2-2li,i}}{a_0(m+1+2\ell-2\ell i)}$
\EndFor
\State
$\nu_{m+2}\gets-\hat{a}_{m+1,0}+(m+2)a_0p_{m+2,0}-a_0p_{m+2-2\ell,1}$
 \State
$\eta_{m+2}\gets\hat{b}_{m+2\ell,0}-(2\ell-1)a_0p_{m+2,0}+a_0q_{m+1,1}$
 \State
 $\sigma_{m+2}\gets-\frac{\eta_{m+2}}{a_0}-\sum_{j=1}^{m+1}
\sum_{i=1}^{j}\frac{\hat{B}_{j,i}
  (\nu_2,...,\nu_{j-i+2})\eta_{m+2-j}}{a_0^{i+1}}$
 \Ensure $p_{i,j}, q_{i,j}$ in \eqref{Hmniop}~and~$\sigma_{m+2}$
\end{algorithmic}
\end{algorithm}

In our Maple code, we implement {\bf Algorithm 5} by a Maple procedure, namely, ULP$(m,\ell)$, where the parameters $m$ and $\ell$ have the same meaning as LLP$(m,\ell)$.
We have designed a Maple procedure called UCP$(m,N,\ell)$ to find the coefficients $\hat{a}_{i,j}, \hat{b}_{i,j}$ required in {\bf Algorithm 5},
where the parameters $m$, $N$ and $\ell$ have the same meaning as LCP$(m,N,\ell)$.
The gluing coefficient $p_{m+2,0}$ required in {\bf Algorithm 5} can be computed by procedures LLP$(m,\ell)$ and LCP$(m,N,\ell)$ as stated in the first step.

Based on the previous analysis, we establish {\bf Algorithm 6} for computing the PP normal form coefficients series $(1+(-1)^k)\sigma^+_{k}$ up to any given number of terms.
\begin{algorithm}
\caption{Computation of the normal form coefficient series $\{(1+(-1)^k)\sigma^+_{k}\}_{k=1}^{N+2}$}
\begin{algorithmic}[1] 
\Require system and $N$
\Comment{$N$ indicates the desire to compute the first $N+2$ terms}
\State compute the new form of \eqref{ge} after the linear transformation $L_p$
\State LLP($0$, $\ell^-$)
\State ULP($0$, $\ell^+$)
\State $\sigma^+_2\gets \frac{\nu^+_2-\eta^+_2}{a_0^+}$
\For{$r=1,...,N$}
\State LCP($r-1$, $N-1$, $\ell^-$)
\State LLP($r$, $\ell^-$)
\State UCP($r-1$, $N-1$, $\ell^+$)
\State ULP($r$, $\ell^+$)
\EndFor
\For{$k=1,...,N+2$}
\State $(1+(-1)^k)\sigma^+_{k}$
\EndFor
\Ensure $(1+(-1)^k)\sigma^+_{k}$, $k=1,...,N+2$
\end{algorithmic}
\end{algorithm}

\section{Application to stabilization of switched systems}

\setcounter{equation}{0}
\setcounter{lm}{0}
\setcounter{thm}{0}
\setcounter{rmk}{0}
\setcounter{df}{0}
\setcounter{cor}{0}

Clearly, once the expression of the piecewise-smooth system~\eqref{ge} is known and $N$ is determined,
we can input them into the corresponding algorithm given in Section 4 depending on the type of the monodromic singular point of system~\eqref{ge}.
Then the first $N+2$ terms of the normal form coefficient series will be output,
which can be further used in the study of the center-focus problem and the Hopf cyclicity problem by applying Theorem~\ref{dinfulvk}.

In this section, motivated by Theorem~\ref{Dpsbst},
we consider applying the normal form method together with the algorithms established in Section 4
to analyze an important practical problem, i.e., the switching control problem (see \cite{LIB} for more details).
More precisely, we obtain a necessary and sufficient condition for a class of switching signals such that a switched nonlinear system,
which has a monodromic singular point $O$ of FF type,
is asymptotically stable.

Consider the $n$-th order switched system used in control theory,
which has the form
\begin{equation}\label{Swswithss}
  \dot{\boldsymbol z}={\boldsymbol f}_{\sigma}({\boldsymbol z}),~{\boldsymbol z}\in \mathbb{R}^n,
\end{equation}
where $\sigma$ takes values in $\{1,...,k\}$ and
$\{{\boldsymbol f}_i: i=1,...,k\}$ is a family of smooth functions from $\mathbb{R}^n$ to $\mathbb{R}^n$.
Note that, as in \cite{SUN,ZHA}
the function $\sigma$ is usually called a {\it switching signal} (or {\it switching law}), which may
depend on time $t$ or state ${\boldsymbol z}$ or both.

Assume that each subsystem has the origin $O$ as an equilibrium, i.e., ${\boldsymbol f}_i({\boldsymbol 0})={\boldsymbol 0}$ for $i=1,...,k$.
If $O$ is asymptotically stable (in the sense of Lyapunov stability), then the switched system~\eqref{Swswithss} is termed asymptotically stable.
In \cite{LIb}, three basic problems regarding stability and design of switched system~\eqref{Swswithss} are listed,
and the third one is as follows.

{\bf Problem C} \cite{LIb}. Construct a switching signal that makes the switched system~\eqref{Swswithss} asymptotically stable.

Clearly, if at least one of the subsystems is asymptotically stable, then
the above problem can be solved by taking $\sigma\equiv i$, where $i$ is the subscript of an asymptotically stable subsystem.
Thus, in the context of {\bf Problem C}, it is understood that none of the individual subsystems is asymptotically stable.
The task of solving {\bf Problem C} by designing such a switching signal is commonly known as {\it stabilization of switched systems}.

The stabilization of the second order switched linear system
\begin{equation}\label{SwswL}
(\dot{x}, \dot{y})^\top=A_{\sigma} (x, y)^\top
\end{equation}
has received much attention (see, e.g., \cite{IWA, SUN, WIC, XU}),
where the switching signal $\sigma$ takes values in $\{1,2\}$.
Note that the conical switching signals are usually designed to stabilize system~\eqref{SwswL} (see e.g., \cite{WU, YUA}).
Under such state-dependent signals, the state space is divided into some conical regions by several lines or rays intersecting at the origin.
Subsequently, stabilization is achieved by appropriately selecting subsystems on these conical regions.
In particular, a special case of conical switching signals of the form
\begin{equation}\label{Sss12}
  \sigma=\left\{\begin{aligned}
  &1,~~~&&{\rm if}~y-Kx>0,\\
  &2,&&{\rm if}~y-Kx<0
  \end{aligned}\right.
\end{equation}
is termed a {\it linear switching signal}.
It is not hard to verify that the switched linear system~\eqref{SwswL} consisting of two subsystems each with an unstable focus,
cannot be stabilized by the linear switching signal~\eqref{Sss12}, according to Theorem~\ref{Dpsbst}.

In this paper, we consider the stabilization of the following second order switched nonlinear system
\begin{equation}\label{Swsnlg}
  (\dot{x}, \dot{y})^\top={\boldsymbol f}_{\sigma}(x, y),
\end{equation}
where $\mathcal{D}_1(x,y):=a_1x+a_2y+a_3x^2+a_4xy+a_5y^2$,
$\mathcal{D}_2(x,y):=b_1x+b_2y+b_3x^2+b_4xy+b_5y^2$ and
\begin{equation*}
  {\boldsymbol f}_{i}(x, y)=\left(
  \begin{array}{c}
      -y \\
      x+\mathcal{D}_i(x,y)y \\
    \end{array}
  \right),~~i\in\{1,2\}.
\end{equation*}
Note that each subsystem $(\dot{x}, \dot{y})^\top={\boldsymbol f}_{i}(x, y)$ corresponds to the generalized Li\'enard equation
$\ddot{x}+\mathcal{D}_i(x,\dot{x})\dot{x}+x=0$,
and its term $\mathcal{D}_i(x,y)y$ corresponds to a nonlinear damping force.
Suppose that $(a_1,...,a_5, b_1,...,b_5)\in \mathbb{R}^{10}$, $3a_5+a_3-a_1a_2>0$ and $3b_5+b_3-b_1b_2>0$.
Combining Remark~\ref{nfceosm} and {\bf Algorithm 2},
we obtain that the first three terms of the normal form series of the subsystem $(\dot{x},\dot{y})^\top={\boldsymbol f}_{1}(x, y)$ (resp. $(\dot{x},\dot{y})^\top={\boldsymbol f}_{2}(x, y)$) are $\varpi_1=\varpi_2=0$ and
\begin{equation}\label{detstba}
  \varpi_3=\frac{3a_5+a_3-a_1a_2}{2}>0~~~\left({\rm resp.}~ \varpi_3=\frac{3b_5+b_3-b_1b_2}{2}>0\right).
\end{equation}
This implies that the origin $O$ is an unstable focus of order $1$ for the two subsystems.

Inspired by Theorem~\ref{Dpsbst}, we can construct a linear switching signal~\eqref{Sss12} to stabilize system~\eqref{Swsnlg},
and obtain the following theorem.
\begin{thm}\label{DSofas}
  System~\eqref{Swsnlg} with a linear switching signal~\eqref{Sss12} is asymptotically stable if and only if the slope $K$ satisfies
  \begin{equation}\label{stsscon}
    (b_1-a_1)K^3+(3a_2-3b_2)K^2+2a_2-2b_2<0.
  \end{equation}
\end{thm}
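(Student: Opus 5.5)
The plan is to reduce the statement to Theorem~\ref{Dpsbst}(a) and then compute a single quantity, the second Lyapunov constant $V_2$. First I rotate coordinates so that the switching line $y=Kx$ becomes the horizontal axis. The region $y-Kx>0$ (subsystem ${\boldsymbol f}_1$) becomes the upper half plane and $y-Kx<0$ (subsystem ${\boldsymbol f}_2$) the lower one. The rotation is orthogonal and orientation preserving, so the resulting piecewise-smooth system has the form \eqref{ge} with a monodromic singular point of FF type, and orbits still rotate counterclockwise because the linear part of both ${\boldsymbol f}_i$ is $(-y,x)$.

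By \eqref{detstba}, $O$ is an unstable focus of order $1$ for both subsystems. Theorem~\ref{Dpsbst}(a) with $\mathcal M=1$ then leaves only two possibilities for the order $\varsigma$ of $O$ in the switched system: $\varsigma=1/2$, or $\varsigma=1$, and in the latter case $O$ is unstable.

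Consequently, system~\eqref{Swsnlg} with the signal \eqref{Sss12} is asymptotically stable if and only if $V_2<0$. Here $V_1=0$ automatically, since $\gamma_1^\pm=0$ for both subsystems. If $V_2=0$, the focus has order $1$ and is unstable. If $V_2>0$, it is an unstable focus of order $1/2$. So it remains to show that $V_2$ equals a positive multiple of the left-hand side of \eqref{stsscon}.

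To compute $V_2$, I would avoid Algorithm~1 and work directly in polar coordinates $x=r\cos\theta$, $y=r\sin\theta$. For ${\boldsymbol f}_i$ one has
$$\frac{dr}{d\theta}=r^2\sin^2\theta\,(c_{i1}\cos\theta+c_{i2}\sin\theta)+O(r^3),$$
where $(c_{11},c_{12})=(a_1,a_2)$ and $(c_{21},c_{22})=(b_1,b_2)$. Only these linear damping coefficients enter at this order; $a_3,a_4,a_5,b_3,b_4,b_5$ affect only the $O(r^3)$ terms.

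Let $\varphi=\arctan K$. The upper half return map corresponds to $\theta\in[\varphi,\varphi+\pi]$ and the lower one to $\theta\in[\varphi+\pi,\varphi+2\pi]$. To second order, the radius after each half turn is
$$r\mapsto r+r^2\int g_i\,d\theta, \qquad g_i:=\sin^2\theta\,(c_{i1}\cos\theta+c_{i2}\sin\theta).$$
The difference function $\Delta(x)$ of \eqref{DeltaF} therefore has
$$V_2=\pm\Big(\int_{\varphi+\pi}^{\varphi+2\pi}g_2\,d\theta-\int_\varphi^{\varphi+\pi}g_1\,d\theta\Big),$$
up to a positive factor. Using $\int_\varphi^{\varphi+\pi}\sin^2\theta\cos\theta\,d\theta=-\tfrac23\sin^3\varphi$ and $\int_\varphi^{\varphi+\pi}\sin^3\theta\,d\theta=2\cos\varphi-\tfrac23\cos^3\varphi$, together with the sign flips of both integrals on the second half turn, factoring out $\tfrac23\cos^3\varphi>0$ and substituting $\tan\varphi=K$, $\sec^2\varphi=1+K^2$ should give
$$V_2=c\big((b_1-a_1)K^3+3(a_2-b_2)K^2+2(a_2-b_2)\big),\qquad c>0.$$
As a cross-check, each single-subsystem integral gives $\mp a_1K^3+a_2(3K^2+2)$, consistent with this form. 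Alternatively, one could confirm the result through Theorem~\ref{dinfulvk}, since $V_2\sim\gamma_2^+-\gamma_2^-$ with $\gamma_2^\pm$ obtained from Algorithm~1 at $m=1$ on the rotated subsystems.

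I expect the main obstacle to be sign bookkeeping rather than the integrals. The points to track are: the orientation of the rotation, so that ${\boldsymbol f}_1$ really lands in $y>0$; the fact that $\Pi^+$ sends the positive ray to the negative ray, which introduces a minus sign into the radial-to-$x$ conversion; and the inversion $(\Pi^-)^{-1}$, which reverses the sign of the lower contribution at second order. I would fix all conventions by testing a trivial case, for instance $a_1=b_1=0$, $a_2=-b_2>0$, $K=0$. There the damping $a_2y^2$ on the upper side and $b_2y^2$ on the lower side make both halves expanding, so $V_2>0$, and the left-hand side of \eqref{stsscon} equals $4a_2>0$, matching as expected.
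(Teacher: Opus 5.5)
Your reduction to the sign of $V_2$ via Theorem~\ref{Dpsbst}(a) is correct. Computing $V_2$ by a direct second-order polar integral, instead of the paper's shear $L$ followed by Algorithm~2 and Theorem~\ref{dinfulvk}, is a legitimate alternative, and your two quoted integrals are right. The error is the displayed formula $V_2=\pm\bigl(\int_{\varphi+\pi}^{\varphi+2\pi}g_2\,d\theta-\int_{\varphi}^{\varphi+\pi}g_1\,d\theta\bigr)$: evaluating it does not produce \eqref{stsscon}. Write $I_1:=\int_{\varphi}^{\varphi+\pi}g_1\,d\theta$ and $I_2:=\int_{\varphi+\pi}^{\varphi+2\pi}g_2\,d\theta$. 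Your integrals give
\[
I_1=\tfrac23\cos^3\varphi\,\bigl(-a_1K^3+a_2(3K^2+2)\bigr),\qquad
I_2=\tfrac23\cos^3\varphi\,\bigl(b_1K^3-b_2(3K^2+2)\bigr).
\]
So $I_2-I_1$ is a multiple of $(a_1+b_1)K^3-(a_2+b_2)(3K^2+2)$, not of the left-hand side of \eqref{stsscon}. Your own sanity test catches this. For $K=0$, $a_1=b_1=0$, $b_2=-a_2$ with $a_2>0$, it gives $I_2-I_1=0$ rather than a positive value. For identical subsystems it gives a generically nonzero $V_2$, which is impossible for a smooth focus (Remark~\ref{nfceosm}).

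The sign you lose is in $\Pi^-$. It starts on the negative ray, so $\Pi^-(X)=-X+I_2X^2+O(X^3)$ for $X<0$. Because its linear coefficient is $-1$, its inverse is $(\Pi^-)^{-1}(x)=-x+I_2x^2+O(x^3)$. Contrary to what you anticipated, inversion does not flip the quadratic coefficient here. With $\Pi^+(x)=-x-I_1x^2+O(x^3)$ this yields $\Delta(x)=(I_1+I_2)x^2+O(x^3)$. Simpler still, $\Pi^-\circ\Pi^+$ is just the full-turn radial map $r\mapsto r+(I_1+I_2)r^2+O(r^3)$, and here $V_2=\bar V_2$ because both half-return maps have linear coefficient $-1$. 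Hence
\[
V_2=I_1+I_2=\tfrac23\cos^3\varphi\,\bigl((b_1-a_1)K^3+3(a_2-b_2)K^2+2(a_2-b_2)\bigr),
\]
as required.

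With this correction your argument is complete and more elementary than the paper's. At second order the trigonometric integrals are trivial, so neither normal forms nor Theorem~\ref{dinfulvk} are needed; the paper's route is the one that scales to higher-order weak foci. As a consistency check, parametrizing the switching line by $x$ rather than by arc length multiplies your $V_2$ by $\sqrt{1+K^2}$ and gives exactly \eqref{SecnNormal}.
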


\begin{proof}
  System~\eqref{Swsnlg} with a linear switching signal~\eqref{Sss12} takes the form
\begin{equation}\label{Swsnlgwits}
\left( \begin{array}{c}
\dot{x}\\
\dot{y}\\
\end{array} \right)=\left\{
\begin{aligned}
&{\boldsymbol f}_{1}(x, y),~~ &&{\rm if}~y-Kx>0,\\
&{\boldsymbol f}_{2}(x, y),~~ &&{\rm if}~y-Kx<0.
\end{aligned} \right.
\end{equation}
It is not hard to verify that the linear transformation $(x,y)^\top\to L(x,y)^\top$ equivalently maps system~\eqref{Swsnlgwits} into the new form
\begin{equation*}
\begin{aligned}
\left( \begin{array}{c}
\dot{x}\\
\dot{y}\\
\end{array} \right)&=\left\{
\begin{aligned}
&L^{-1}\cdot{\boldsymbol f}_{1}\circ L(x, y),~~ &&{\rm if}~y>0,\\
&L^{-1}\cdot{\boldsymbol f}_{2}\circ L(x, y),~~ &&{\rm if}~y<0,
\end{aligned} \right.\\
&=\left\{
\begin{aligned}
&
\left(
\begin{array}{c}
-y-Kx \\
(1+K^2)x+Ky+\mathcal{D}_1(x,y+Kx)(y+Kx) \\
\end{array}
\right),~~ &&{\rm if}~y>0,\\
&\left(
\begin{array}{c}
-y-Kx \\
(1+K^2)x+Ky+\mathcal{D}_2(x,y+Kx)(y+Kx) \\
\end{array}
\right),~~ &&{\rm if}~y<0,
\end{aligned} \right.
\end{aligned}
\end{equation*}
where
\begin{equation*}
  L:=\left(
    \begin{array}{cc}
     1 & 0 \\
     K & 1 \\
    \end{array}
  \right).
\end{equation*}
Clearly, the origin $O$ is a monodromic singular point of FF type for the new system.
Thus, from {\bf Algorithm 2},
we compute that the first normal form coefficient equals $0$ and the second one equals
\begin{equation}\label{SecnNormal}
  \frac{(2b_1-2a_1)K^3+(6a_2-6b_2)K^2+4a_2-4b_2}{3K^{2}+3}.
\end{equation}
Denote the numerator of \eqref{SecnNormal} by $\mathcal{N}(K)$.
It follows from Theorems~\ref{dinfulvk} and \ref{Dpsbst} that the origin $O$ is a stable (resp. unstable) focus of the new system when $\mathcal{N}(K)<0$ (resp. $\mathcal{N}(K)\ge 0$).
Therefore, the proof is finished.
\end{proof}

A deterministic switching signal is said to possess {\it dwell time} $\tau$ if $t_{i+1}-t_i\geq\tau$ for any two consecutive switching times $t_i$ and $t_{i+1}$.
As stated in \cite{SUN}, a good switching signal should satisfy that it not only makes the switched system stable, but also avoids fast switching, preferably with a guaranteed positive dwell time.
By the Period Coefficient Lemma in \cite{CHI},
we can verify that the linear switching signal~\eqref{Sss12} used to stabilize the switching system~\eqref{Swsnlg} has a positive dwell time $\tau\approx \pi$ in a small neighborhood of the origin $O$.
It is worth noting that the method presented in Theorem~\ref{DSofas} can be used to stabilize more general switched systems with two unstable subsystems that possess a higher-order weak focus.

We employ the following example in order to verify Theorem~\ref{DSofas}.
\begin{exam}\label{exa}
  Stabilize the switched nonlinear system $(\dot{x}, \dot{y})^\top={\boldsymbol f}_{\sigma}(x, y)$ with
  \begin{equation*}
  {\boldsymbol f}_{1}(x, y)=\left(
  \begin{array}{c}
      -y \\
      x+(x+2y+x^2+y^2)y \\
    \end{array}
  \right)~~~{\rm and}~~~
{\boldsymbol f}_{2}(x, y)=\left(
  \begin{array}{c}
      -y \\
      x+(6x+y+2x^2+2y^2)y \\
    \end{array}
  \right).
\end{equation*}
\end{exam}

For the two subsystems, one can verify that the coefficients satisfy \eqref{detstba}.
Thus, the origin $O$ is an unstable focus, and their local phase portraits are shown in Figure~\ref{fili}.
Since both subsystems are unstable, we consider stabilizing the switched system in Example~\ref{exa} by a linear switching signal~\eqref{Sss12}.
According to Theorem~\ref{DSofas}, asymptotic stability is achieved if and only if
\begin{equation*}
5K^3+3K^2+2=(K+1)(5K^2-2K+2)<0,
\end{equation*}
i.e., $K\in (-\infty,-1)$.
In what follows, to verify this result we take $K=-2$ as an example,
and use the switched signal~\eqref{Sss12}$|_{K=-2}$ to stabilize the switching nonlinear system given in Example~\ref{exa},
as shown in Figure~\ref{keo}.
\begin{figure}[h]
\centering{}
\subfigure[Unstable focus $O$ of $(\dot{x},\dot{y})^\top={\boldsymbol f}_{1}(x, y)$]{
	\scalebox{0.3}[0.3]{
	\includegraphics{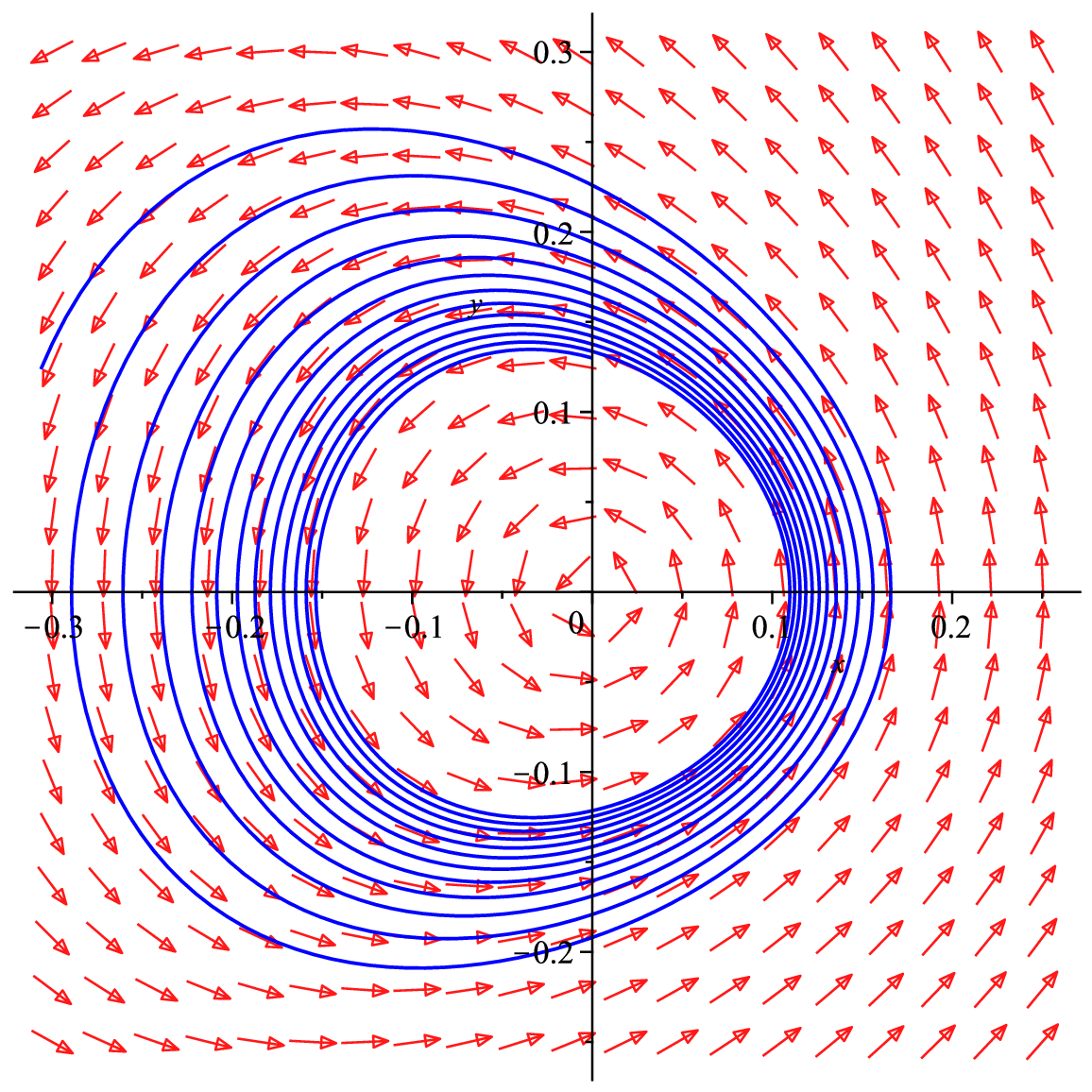}
	}
}~~~~~~
\subfigure[Unstable focus $O$ of $(\dot{x},\dot{y})^\top={\boldsymbol f}_{2}(x, y)$]{
	\scalebox{0.3}[0.3]{
	\includegraphics{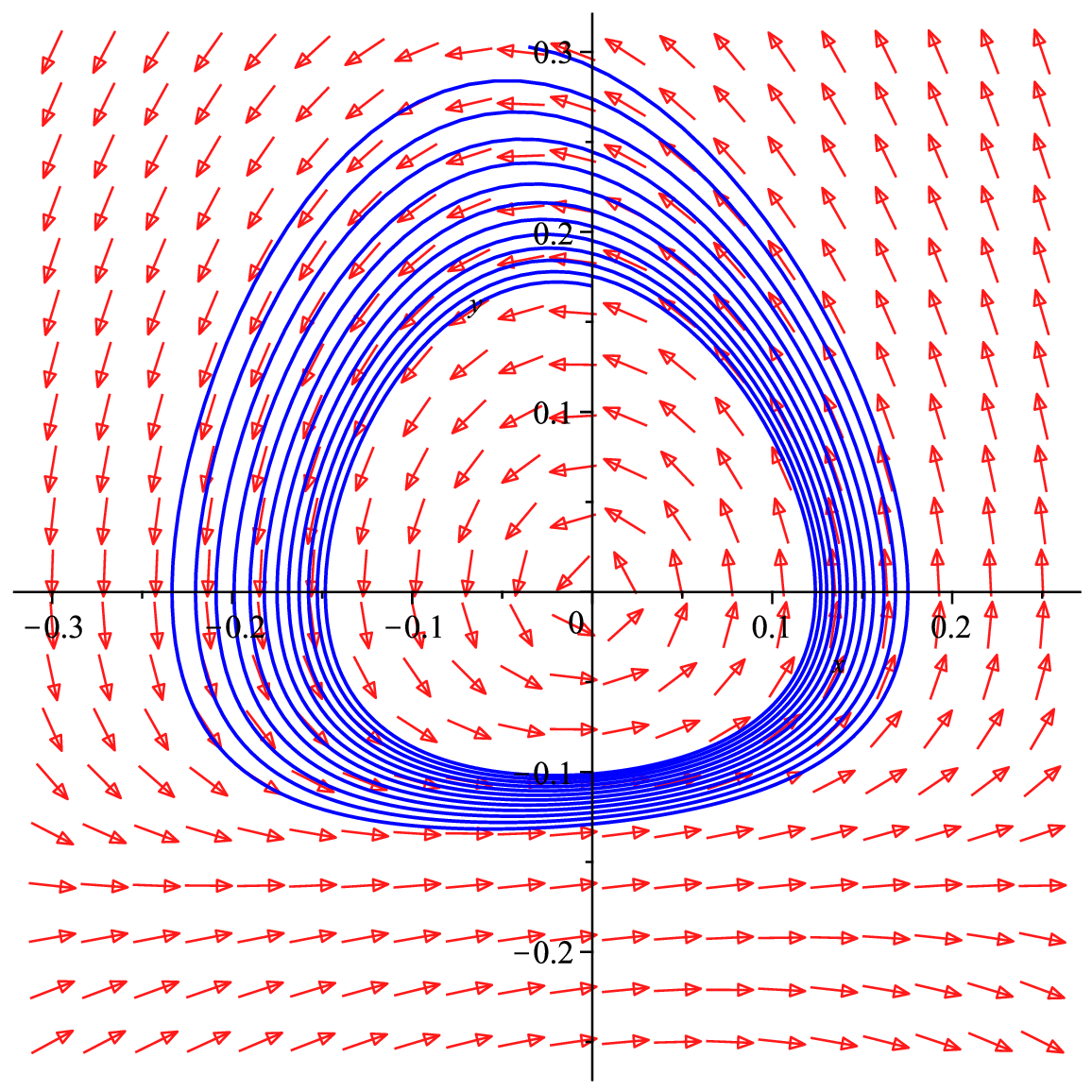}
	}
}
\caption{Local phase portraits of the two subsystems near the origin $O$.}
\label{fili}
\end{figure}
\begin{figure}[h]
  \centering
  \includegraphics[width=6.5cm]{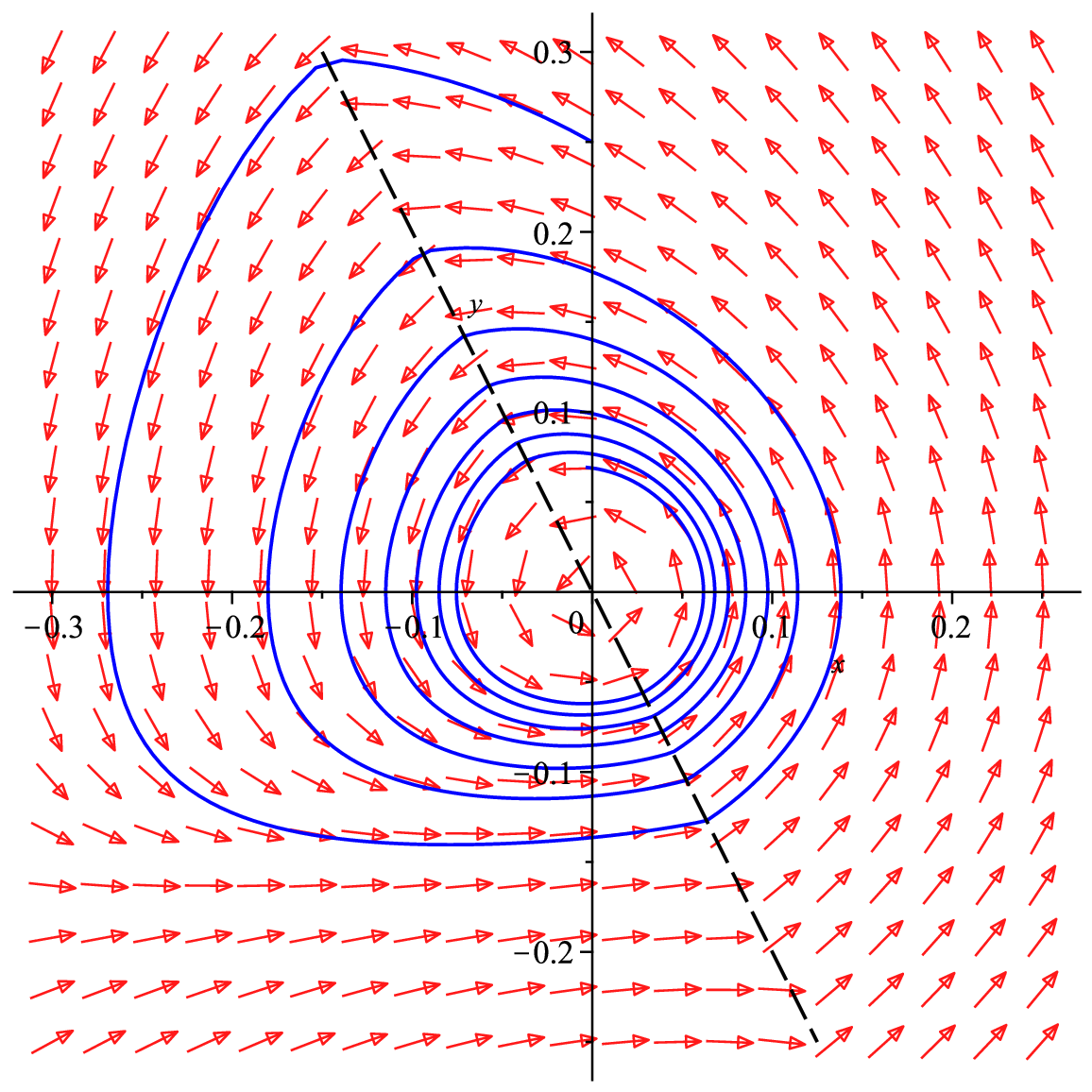}
  \caption{Stabilization of the switched system by the linear switching signal~\eqref{Sss12}$|_{K=-2}$.}
  \label{keo}
\end{figure}

Note that although the parameters in Example~\ref{exa} are chosen as specific numbers for clarity,
the conclusion of Theorem~\ref{DSofas} depends only on the condition~\eqref{stsscon},
which is satisfied for an open set of parameters near the chosen values. Therefore, the stabilizing effect of the linear switching signal~\eqref{Sss12} is robust under small perturbations of the system coefficients, and the same computational procedure applies to any switched system of the form~\eqref{Swsnlg} fulfilling condition~\eqref{detstba}.

\section*{Acknowledgments}

This work is financially supported by the National Key R \& D Program of China (No. 29302022YFA1005900), National Science Foundation of China (No. 12271378), Sichuan Science and Technology Program (No. 2024NSFJQ0008) and NSFSPC (No. 2026NSFSC0008).

\section*{Declarations and data availability}

All authors contribute equally to this work and are listed alphabetically. The authors declare no competing interests.
No data was used for the research described in the article.

\end{document}